
\documentclass[reqno,a4paper]{amsart}
\usepackage{amssymb}
\usepackage{amsmath}

\setlength{\topmargin}{0 pt} 
\setlength{\textwidth}{12.5cm}
\setlength{\parindent}{0.8cm}
\frenchspacing
\setcounter{page}{0}

\begin{document} 
\newtheorem{prop}{Proposition}[section]
\newtheorem{Def}{Definition}[section] \newtheorem{theorem}{Theorem}[section]
\newtheorem{lemma}{Lemma}[section] \newtheorem{Cor}{Corollary}[section]

\title[Klein - Gordon - Schr\"odinger system]{\bf Some new well-posedness 
results for the Klein - Gordon - Schr\"odinger system}
\author[Hartmut Pecher]{
{\bf Hartmut Pecher}\\
Fachbereich Mathematik und Naturwissenschaften\\
Bergische Universit\"at Wuppertal\\
Gau{\ss}str.  20\\
42097 Wuppertal\\
Germany}
\email{\tt pecher@math.uni-wuppertal.de}
\date{}

\begin{abstract} 
We consider the Cauchy problem for the 2D and 3D Klein-Gordon-Schr\"odinger 
system. In 2D we show local well-posedness for Schr\"odin-\\ger data in $H^s$ 
and wave data in $H^{\sigma} \times H^{\sigma -1}$ for $s=-1/4 \, +$ and 
$\sigma 
= -1/2$, whereas ill-posedness holds for $s<- 1/4$ or $\sigma <-1/2$, and global 
well-posedness 
for $s\ge 0$ and $s-\frac{1}{2} \le \sigma < s+ \frac{3}{2}$. In 3D we show 
global well-posedness for $s \ge 0$ , $ s - \frac{1}{2} < \sigma \le s+1$.
Fundamental for our results are the studies by Bejenaru, Herr, Holmer and 
Tataru \cite{BHHT}, and Bejenaru and Herr \cite{BH} for the Zakharov system, 
and also the global well-posedness results for the Zakharov and 
Klein-Gordon-Schr\"odinger system by Colliander, Holmer and Tzirakis 
\cite{CHT}.
\end{abstract}
\maketitle

\renewcommand{\thefootnote}{\fnsymbol{footnote}}
\footnotetext{\hspace{-1.8em}{\it 2000 Mathematics Subject Classification:} 
35Q55, 35L70 \\
{\it Key words and phrases:} Klein - Gordon - Schr\"odinger system, 
well-posedness  
, Fourier restriction norm method}

\normalsize 
\setcounter{section}{0}
\section{Introduction and main results}

We consider the Cauchy problem for the Klein - Gordon - 
Schr\"odinger system with Yukawa coupling
\begin{eqnarray}
\label{0.1}
i\partial_t u + \Delta u & = & nu
\\
\label{0.2}
\partial_t^2 n + (1-\Delta) n & = & |u|^2
\end{eqnarray}
with initial data
\begin{equation}
u(0)  =  u_0 \,  , \, n(0)  =  n_0 \, , \, \partial_t 
n(0) = n_1 \, ,
\label{0.3}
\end{equation}
where $u$ is a complex-valued and $n$ a real-valued function defined for $(x,t) 
\in {\mathbb R}^D \times [0,T]$ , $D=2$ or $D=3$ .
This is a classical model which describes a system of scalar nucleons 
interacting with neutral scalar mesons. The nucleons are described by the 
complex scalar field $u$ and the mesons by the real scalar field $n$. The mass 
of the meson is normalized to be 1.

Our results do not use the energy conservation 
law but only charge conservation $ \|u(t)\|_{L^2({\mathbb R}^D)} \equiv const $ 
(for 
the global existence result), so they are equally true if one replaces $nu$ and 
$|u|^2$ by $-nu$ and/or $-|u|^2$ , respectively.

We are interested in local and global solutions for data
$$ u_0 \in H^s({\mathbb R}^D) \, , \, n_0 \in H^{\sigma}({\mathbb R}^D) \, , \, 
n_1 \in 
H^{\sigma -1}({\mathbb R}^D) \, .$$

In the case $D=3$ local well-posedness in Bourgain type spaces was proven by 
the 
author \cite{P} under the assumptions
$$ s > - \frac{1}{4} \ , \, \sigma > - \frac{1}{2} \, , \, \sigma -2s < 
\frac{3}{2} \, , \, \sigma - 2 <s < \sigma +1 \, . $$
Moreover it was shown that up to the endpoints these conditions are sharp.

Global well-posedness in $D=3$ in spaces of Strichartz type was shown by 
Colliander, 
Holmer and Tzirakis \cite{CHT} in the case $s=\sigma \ge 0$. This is also true 
in $D=2$ by similar arguments. Unconditional uniqueness in the natural solution 
spaces in this case also holds \cite{P}.

In the case $D=2$ we are now able to show that local well-posedness in Bourgain 
type spaces holds under the same assumptions as in $D=3$ including the case 
$\sigma = 
-\frac{1}{2}$ (Theorem \ref{Theorem 2.1}). The ill-posedness statement also 
carries over to the case $D=2$ (Theorem \ref{Theorem 2.2}).
 
We also show global well-posedness in $D=2$ for $u_0 \in L^2$ , $ n_0 \in 
H^{\sigma}$ , $ n_1 \in H^{\sigma -1}$, if $-\frac{1}{2} \le \sigma < 
\frac{3}{2}$, and more generally for $u_0 \in H^s$ , $n_0 \in H^{\sigma}$ , $ 
n_1 \in H^{\sigma -1}$, if $s \ge 0$ , $ s-\frac{1}{2} \le \sigma < s + 
\frac{3}{2}$ (Theorem \ref{Theorem 3.2}).

In the cases $D=2$ and $D=3$ we show global well-posedness in the case $0 \le s 
\le \sigma \le s+1$ in spaces of Strichartz type (Theorem \ref{Theorem 5.1}), 
and in the case $ s \ge 0$ , $ 
s-\frac{1}{2} < \sigma < s$ in spaces of Bourgain type (Theorem \ref{Theorem 
1.6'}).

In any of these cases unconditional uniqueness holds if $s,\sigma \ge 0$ in 
$D=2$ and $D=3$ (cf. Theorem \ref{Theorem 3.2} and Theorem \ref{Theorem 5.2}, 
respectively).

The results in this paper are based on the (3+1)-dimensional estimates by 
Bejenaru and Herr \cite{BH} which they recently used to show a sharp 
well-posedness result for the Zakharov system. We also use the corresponding 
sharp (2+1)-dimensional local well-posedness results for the Zakharov system by 
Bejenaru, Herr, Holmer and Tataru \cite{BHHT}.

Concerning the closely related wave Schr\"odinger system local well-posedness 
in 
$D=3$ 
was shown for $ s > - \frac{1}{4} $ and $ \sigma > - \frac{1}{38} $ and also 
global well-posedness for certain $ s,\sigma < 0$ by T. Akahori 
\cite{A}.

We use the standard Bourgain spaces $X^{m,b}$ for the Schr\"odinger equation, 
which are defined as the completion of ${\mathcal S}({\mathbb R}^D \times 
{\mathbb R})$ 
with respect to
$$ \|f\|_{X^{m,b}} := \| \langle \xi \rangle ^m \langle \tau + |\xi|^2 \rangle 
^b \widehat{f}(\xi,\tau)\|_{L^2_{\xi\tau}} \, . $$
Similarly $X^{m,b}_{\pm}$ for the equation $i\partial_t n_{\pm} \mp 
A^{1/2}n_{\pm} = 0$ is the completion of ${\mathcal S}({\mathbb R}^D \times 
{\mathbb R})$ 
with respect to
$$ \|f\|_{X^{m,b}_{\pm}} := \| \langle \xi \rangle ^m \langle \tau \pm |\xi| 
\rangle ^b \widehat{f}(\xi,\tau)\|_{L^2_{\xi\tau}} \, . $$
For a given time interval $I$ we define $ \|f\|_{X^{m,b}(I)} := 
\inf_{\tilde{f}_{|I} = f} \|\tilde{f}\|_{X^{m,b}} $ and similarly $ 
\|f\|_{X^{m,b}_{\pm}(I)}$ . We often skip $I$ from the notation.

In the following we mean by a solution of a system of differential equation
always a solution of the corresponding system of integral equations.

Before formulating the main results of our paper we recall that the KGS system 
can be transformed into a first order (in t) system as follows: if
$$ (u,n,\partial_t n) \in C^0([0,T],H^s) \times  C^0([0,T],H^{\sigma}) \times  
C^0([0,T],H^{\sigma -1}) $$ is a solution of 
(\ref{0.1}),(\ref{0.2}),(\ref{0.3}) 
with data $(u_0,n_0,n_1) \in H^s \times H^{\sigma} \times H^{\sigma -1}$ ,then 
defining  $A:= - 
\Delta + 1$ and 
$$ n_{\pm} := n \pm iA^{-\frac{1}{2}} \partial_t n  $$ 
and 
$$ n_{\pm 0} := n_0 \pm iA^{-\frac{1}{2}}n_1 \in H^{\sigma} \, ,$$
 we get that
$$(u,n_+,n_-) \in C^0([0,T],H^s) \times  C^0([0,T],H^{\sigma}) \times  
C^0([0,T],H^{\sigma})$$
is a solution of the following problem:
\begin{eqnarray}
\label{0.1'}
i \partial_t u + \Delta u & = & \frac{1}{2}(n_+ - n_-)u \\
\label{0.2'}
i \partial_t n_{\pm} \mp A^{1/2} n_{\pm} & = & \pm A^{-1/2}(|u|^2) \\
\label{0.3'}
u(0) = u_0 & , & n_{\pm}(0) = n_{\pm 0} := n_0 \pm i A^{-1/2}n_1 \, . 
\end{eqnarray}
The corresponding system of integral equations reads as follows: 
\begin{eqnarray}
\label{8}
u(t) & = & e^{it\Delta} u_0 + \frac{1}{2} \int_0^t e^{i(t-\tau)\Delta} 
(n_+(\tau)+n_-(\tau))u(\tau) d\tau \\
\label{9}
n_{\pm}(t) & = & e^{\mp itA^{1/2}}n_{\pm 0} \pm i \int_0^t e^{\mp 
i(t-\tau)A^{1/2}} 
A^{-1/2}(|u(\tau)|^2) d\tau \, .
\end{eqnarray}

Conversely, if 
$$(u,n_+,n_-) \in X^{s,b}[0,T] \times X_+^{\sigma,b}[0,T]  \times 
X_-^{\sigma,b}[0,T]$$
is a solution of (\ref{0.1'}),(\ref{0.2'}) with data $u(0)=u_0 \in H^s$ and 
$n_{\pm}(0) = n_{\pm 0} \in H^{\sigma}$ , then we define
$n := \frac{1}{2}(n_+ + n_-)$ , $ 2i A^{-\frac{1}{2}}\partial_t n := n_+ - n_-$ 
and conclude that
$$(u,n,\partial_t n) \in X^{s,b}[0,T] \times (X_+^{\sigma,b}[0,T] + 
X_-^{\sigma,b}[0,T]) \times (X_+^{\sigma -1,b}[0,T] + X_-^{\sigma -1,b}[0,T]) 
$$
is a solution of (\ref{0.1}),(\ref{0.2}) with data 
$ u(0)=u_0 \in H^s$ and
 $$ n(0)=n_0= \frac{1}{2}(n_+(0)+n_-(0)) \in H^{\sigma} \, , \, \partial_t n(0) 
= \frac{1}{2i} A^{\frac{1}{2}}(n_+(0)-n_-(0)) \in H^{\sigma -1} \, . $$
If $(u,n_+,n_-) \in C^0([0,T],H^s) \times  C^0([0,T],H^{\sigma}) \times  
C^0([0,T],H^{\sigma}) $ , then we also have $(u,n,\partial_t n) \in 
C^0([0,T],H^s) \times  C^0([0,T],H^{\sigma}) \times  C^0([0,T],H^{\sigma -1})$ 
.\\[1em]
Our local well-posedness result in 2D reads as follow:
\begin{theorem}
\label{Theorem 2.1}
The Klein - Gordon - Schr\"odinger system 
(\ref{0.1}),(\ref{0.2}),(\ref{0.3}) in 2D is locally well-posed for data 
$$ u_0 \in H^s({\mathbb R}^2) \, , \, n_0 \in H^{\sigma}({\mathbb R}^2) \, , \,  
n_1 
\in H^{\sigma -1}({\mathbb R}^2)$$
under the assumptions
$$ s > - \frac{1}{4} \, , \, \sigma \ge - \frac{1}{2} \, , \, \sigma -2s < 
\frac{3}{2} \, , \, \sigma -2 < s < \sigma +1 \, . $$
More precisely, there exists $ T > 0 $ , 
$T=T(\|u_0\|_{H^s},\|n_0\|_{H^{\sigma}},\|n_1\|_{H^{\sigma -1}})$ and a unique 
solution
$$ u \in X^{s,\frac{1}{2}+}[0,T] \, , $$
$$ n \in X_+^{\sigma,\frac{1}{2}+}[0,T] + 
X_-^{\sigma,\frac{1}{2}+}[0,T] \, , \, \partial_t n  \in X_+^{\sigma 
-1,\frac{1}{2}+}[0,T] + X_-^{\sigma -1,\frac{1}{2}+}[0,T] \, . $$
This solution has the property
$$ u \in C^0([0,T],H^s({\mathbb R}^2)) \, , \,  n \in 
C^0([0,T],H^{\sigma}({\mathbb 
R}^2)) \, , \, \partial_t n \in 
C^0([0,T],H^{\sigma -1}({\mathbb R}^2)) \, . $$
Under the additional assumption $s,\sigma \ge 0$ we also have (unconditional) 
uniqueness in these latter spaces.
\end{theorem}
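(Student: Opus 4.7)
The plan is to reduce (\ref{0.1})--(\ref{0.3}) to the first-order system (\ref{0.1'})--(\ref{0.3'}) described in the introduction and solve the integral formulation (\ref{8}),(\ref{9}) by the contraction mapping principle in
$$ X^{s,1/2+}[0,T] \times X_+^{\sigma,1/2+}[0,T] \times X_-^{\sigma,1/2+}[0,T]. $$
After inserting a smooth time cut-off, the standard linear theory for Bourgain spaces reduces everything to the two bilinear estimates
$$ \|n_\pm u\|_{X^{s,-1/2+2\varepsilon}} \lesssim \|n_\pm\|_{X_\pm^{\sigma,1/2+\varepsilon}}\|u\|_{X^{s,1/2+\varepsilon}}, $$
$$ \|A^{-1/2}(u\bar v)\|_{X_\pm^{\sigma,-1/2+2\varepsilon}} \lesssim \|u\|_{X^{s,1/2+\varepsilon}}\|v\|_{X^{s,1/2+\varepsilon}}, $$
a small gain $T^{\delta}$ being extracted from the slack in the $b$-exponents. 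Persistence of regularity $(u,n,\partial_t n)\in C^0([0,T],H^s\times H^{\sigma}\times H^{\sigma-1})$ then follows automatically from the embedding $X^{\cdot,1/2+}\hookrightarrow C^0H^{\cdot}$ and the transformation back from $n_\pm$ to $(n,\partial_t n)$.

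The central observation is that, up to harmless low-frequency modifications, these are precisely the sharp (2+1)-dimensional bilinear estimates established by Bejenaru--Herr--Holmer--Tataru \cite{BHHT} for the 2D Zakharov system. The only structural differences lie in the multiplier $A^{-1/2}=\langle\nabla\rangle^{-1}$ in place of $|\nabla|^{-1}$, and the wave symbol $\langle\tau\pm\langle\xi\rangle\rangle$ in place of $\langle\tau\pm|\xi|\rangle$. On the region $|\xi|\gtrsim 1$ both pairs coincide up to multiplicative constants, while on the complementary low-frequency region the Klein--Gordon symbols are strictly more favourable: the mass term removes the singularity of $|\nabla|^{-1}$ at $\xi=0$, and $|\langle\xi\rangle-|\xi||\lesssim 1$ can be absorbed into $\langle\tau\pm|\xi|\rangle^{1/2+}$. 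A dyadic frequency decomposition and the high/low-high/low case distinction then transfer the \cite{BHHT} estimates, including the sharp endpoint $\sigma=-1/2$, to the KGS setting. This transfer, and in particular the verification that no loss enters the resonant low-modulation sectors where \cite{BHHT} is delicate, is the main technical obstacle; the algebraic conditions $s>-1/4$, $\sigma\ge -1/2$, $\sigma-2s<3/2$, $\sigma-2<s<\sigma+1$ arise exactly as in \cite{BHHT}.

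Once these bilinear estimates are in hand, the contraction mapping principle yields a unique fixed point together with Lipschitz continuous dependence on the data on an interval $[0,T]$ whose length depends only on $\|u_0\|_{H^s}+\|n_{\pm 0}\|_{H^{\sigma}}$, hence on $\|u_0\|_{H^s}+\|n_0\|_{H^{\sigma}}+\|n_1\|_{H^{\sigma-1}}$. For the unconditional uniqueness assertion under $s,\sigma\ge 0$ I would compare two solutions lying only in the $C^0H^{\cdot}$ classes: in this subcritical regime Sobolev embedding places $nu$ and $|u|^2$ in $L^1_tL^2_x$, so Strichartz estimates for the Schr\"odinger and Klein--Gordon propagators upgrade any such solution to the Bourgain class $X^{s,1/2+}\times X_\pm^{\sigma,1/2+}$ on a possibly shorter subinterval, and the conditional uniqueness already established then forces the two solutions to coincide; iterating covers the full interval.
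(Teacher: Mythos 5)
Your main local well-posedness argument follows essentially the same route as the paper: reduce to the first-order system for $(u,n_+,n_-)$, run a contraction in $X^{s,\frac12+}\times X_\pm^{\sigma,\frac12+}$, extract the factor $T^{\delta}$ from the slack in the $b$-exponents, and prove the two bilinear estimates by the dyadic/angular decomposition machinery of \cite{BHHT}. One caveat on that part: the final bilinear estimates of \cite{BHHT} are \emph{not} directly quotable here, because the Zakharov nonlinearity for the wave part is $\Delta|u|^2$ (reduced form $|\nabla||u|^2$) whereas here it is $A^{-1/2}|u|^2$ -- a difference of two derivatives -- so the admissible $(s,\sigma)$ region is genuinely different (hence $s>-\frac14$, $\sigma-2s<\frac32$ rather than the Zakharov conditions). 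What transfers are the $L^2$-building blocks (Props.\ 4.3, 4.4, 4.6--4.9 of \cite{BHHT}); the dyadic summation with the KGS weights has to be redone, which is exactly the content of the paper's Propositions \ref{Prop. 2.2} and \ref{Prop. 2.3}. Your sketch correctly identifies this as the technical core, so I regard that part as essentially the paper's approach, merely under-detailed.

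There is, however, a genuine gap in your unconditional uniqueness argument. First, at the relevant base case $s=\sigma=0$ (2D), Sobolev and H\"older give only $nu,\,|u|^2\in L^\infty_t L^1_x\subset L^2_t H^{-1-}_x$, not $L^1_tL^2_x$; the product of two $L^2_x$ functions is merely $L^1_x$. Second, and more importantly, even if the nonlinearity were in $L^1_tL^2_x$, this does not upgrade the Duhamel term to $X^{0,\frac12+}$: the inhomogeneous estimate $\|\int_0^t e^{i(t-\tau)\Delta}F\,d\tau\|_{X^{s,b}}\lesssim\|F\|_{X^{s,b-1}}$ with $b>\frac12$ requires negative-modulation control of $F$, which $L^1_tH^s$ does not provide (sharp cut-offs in time already fail to lie in $X^{0,\frac12}$). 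So your proposed Strichartz upgrade does not place the rough solution in the class where conditional uniqueness holds. The paper's mechanism is different and is the essential missing ingredient: from $nu\in L^2_tH^{-1-}$ and the equation one gets $u\in X^{-1-,1}[0,T]$ (full temporal weight $b=1$ at the cost of spatial regularity), interpolation with the trivial bound $u\in X^{0,0}$ gives $u\in X^{-\frac12-,\frac12+}$, and similarly $n_\pm\in X_\pm^{0-,1}$; then the dedicated bilinear estimate of Proposition \ref{Prop. 2.2'}, $\|un\|_{X^{-\epsilon,-\frac12+}}\lesssim\|u\|_{X^{-\frac12-,\frac12+}}\|n\|_{X_\pm^{0-,\frac12+}}$, bootstraps $u$ up to $X^{-\epsilon,\frac12+}$ for every $\epsilon>0$, and conditional uniqueness at the admissible regularity $(s,\sigma)=(-\epsilon,0-)$ concludes. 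Without this interpolation device (Zhou's trick) and the extra bilinear estimate tailored to the low-regularity class $X^{-\frac12-,\frac12+}$, the unconditional uniqueness claim is not established.
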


These conditions are sharp up to the endpoints. We namely have the following 
result, which can be proven exactly as in the case $D=3$.
\begin{theorem}
\label{Theorem 2.2}
Let $ u_0 \in H^s({\mathbb R}^2)$ ,  $ n_0 \in H^{\sigma}({\mathbb R}^2)$ , $ 
n_1 \in 
H^{\sigma -1}({\mathbb R}^2)$ . Then the flow map $ (u_0,n_0,n_1) \mapsto 
(u(t),n(t),\partial_t n(t)) $ , $ t \in [0,T] $ , does not belong to $C^2$ for 
any $ T > 0 $ , provided $ \sigma -2s-\frac{3}{2} > 0 $ or $ s < - \frac{1}{4} 
$ 
or $ 
\sigma < - \frac{1}{2} $ .
\end{theorem}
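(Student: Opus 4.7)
The plan is to follow the standard $C^2$-ill-posedness strategy, which indeed transfers essentially verbatim from the $D=3$ case in \cite{P} because both dimensions admit the same resonance structure for the Schr\"odinger--wave interaction. The core observation is that if the flow map $(u_0,n_0,n_1)\mapsto (u(t),n(t),\partial_t n(t))$ were of class $C^2$ from $H^s\times H^\sigma\times H^{\sigma-1}$ into $C^0([0,T];H^s\times H^\sigma\times H^{\sigma-1})$, then by differentiating the integral equations (\ref{8})--(\ref{9}) twice at the origin, one obtains explicit bilinear operators arising from the first non-trivial Picard iterate, and these would have to be bounded in the corresponding norms.

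Concretely, I would extract the two bilinear operators
\begin{align*}
B_1(n_{\pm 0},u_0)(t) &:= \int_0^t e^{i(t-\tau)\Delta}\bigl(e^{\mp i\tau A^{1/2}} n_{\pm 0}\cdot e^{i\tau\Delta}u_0\bigr)\,d\tau,\\
B_2(u_0,v_0)(t) &:= \int_0^t e^{\mp i(t-\tau)A^{1/2}} A^{-1/2}\bigl(e^{i\tau\Delta}u_0\cdot\overline{e^{i\tau\Delta}v_0}\bigr)\,d\tau,
\end{align*}
and reduce the theorem to showing that $B_1:H^\sigma\times H^s\to H^s$ fails when $s<-1/4$ or $\sigma-2s>3/2$, and that $B_2:H^s\times H^s\to H^\sigma$ fails when $\sigma<-1/2$ or $\sigma-2s>3/2$, uniformly on any fixed time interval $[0,T]$.

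To construct the counterexamples I would choose frequency-localized data $\widehat{u_0}=\mathbf{1}_A$, $\widehat{v_0}=\mathbf{1}_B$, $\widehat{n_{\pm 0}}=\mathbf{1}_C$, where $A,B,C\subset\mathbb R^2$ are thin slabs tuned to the three resonance regimes: (i) for $\sigma<-1/2$, place $A,B$ as two thin tubes of opposite high Schr\"odinger frequency whose difference lies in a small low-frequency ball, so that $u_0\bar v_0$ produces arbitrarily small wave output frequencies; (ii) for $s<-1/4$, exploit the resonance $\tau+|\xi|^2$ vs.\ $\tau'\pm|\xi'|$ so that $n_{\pm 0}u_0$ concentrates on the Schr\"odinger paraboloid; (iii) for $\sigma-2s>3/2$, combine the two. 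Computing the Fourier transform of $B_1$ or $B_2$ on these data reduces to an integral over a modulation set of size comparable to the longest time-frequency support, and the $H^s$- or $H^\sigma$-norm of the output versus the $H^s\times H^\sigma$- or $H^s\times H^s$-norm of the input diverges along the sequence.

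The main obstacle, and the only place where the dimension actually appears, is verifying that the measure of the resonance region in $\mathbb R^2$ scales correctly so that the thresholds $\sigma=-1/2$, $s=-1/4$, $\sigma-2s=3/2$ emerge as the exact breakpoints. Once the Jacobian and volume computations are redone with the 2D Lebesgue measure (replacing the spherical surface areas used in 3D by their planar analogues), the same algebra of Sobolev exponents yields the claimed sharpness; the $C^2$ hypothesis is then contradicted by letting the thin slabs shrink. All other steps (relating failure of the bilinear bounds to failure of the $C^2$ property of the flow map, and passing between the $(n,\partial_t n)$ and $(n_+,n_-)$ formulations) are direct and dimension-independent.
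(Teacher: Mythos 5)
Your overall strategy -- reduce $C^2$-regularity of the flow map to boundedness of the two bilinear operators $B_1,B_2$ coming from the second Picard iterate, and defeat them with frequency-localized near-resonant data -- is exactly the strategy of the cited $3$D proof in \cite{P}, to which the paper simply defers ("can be proven exactly as in the case $D=3$"). However, your assignment of which operator fails under which condition is partly backwards, and two of your claims are impossible on general grounds. Since $\langle\xi\rangle^{\sigma}$ is nondecreasing in $\sigma$, the norm $\|B_2(u_0,v_0)\|_{H^{\sigma}}$ is nonincreasing as $\sigma$ decreases; as the paper proves the corresponding estimate at $\sigma=-\tfrac12$ (e.g.\ for $s=0$), the map $B_2:H^s\times H^s\to H^{\sigma}$ cannot suddenly fail for $\sigma<-\tfrac12$. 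Likewise, in $B_1:H^{\sigma}\times H^s\to H^s$ a larger $\sigma$ only strengthens the hypothesis on the wave datum, so $B_1$ cannot fail in the regime $\sigma-2s>\tfrac32$ where $\sigma$ is large. The correct matching, dictated by Propositions \ref{Prop. 2.2} and \ref{Prop. 2.3}, is the opposite of yours: the threshold $\sigma=-\tfrac12$ lives in $B_1$ (wave datum of high frequency $N$ with $\|n_0\|_{H^{\sigma}}$ small but $\|n_0\|_{L^2}\sim N^{-\sigma}$ large, resonating with a Schr\"odinger wave so that the output in $H^s$ grows like $N^{-\sigma-1/2}$), while the thresholds $s=-\tfrac14$ and $\sigma-2s=\tfrac32$ live in $B_2$ (two Schr\"odinger waves at frequency $N$ producing, respectively, a low-frequency and a frequency-$N$ wave output).

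Consequently your counterexample geometries (i) and (ii) are mismatched to the conditions they are supposed to detect: the low-output-frequency construction in (i) sees only the $L^2$ norm of the wave output, since $\langle\xi\rangle^{\sigma}\sim 1$ for $|\xi|\lesssim 1$, and therefore cannot produce any $\sigma$-threshold at all (it is in fact the right construction for $s<-\tfrac14$); and the Schr\"odinger-paraboloid concentration in (ii) tests $B_1$, whose estimate the paper shows holds for all $s>-\tfrac12$ given $\sigma\ge-\tfrac12$, $s<\sigma+1$, so no obstruction appears at $s=-\tfrac14$ there. Beyond this misassignment, the quantitative core -- the choice of slab dimensions, the verification that the relevant resonance function stays $O(1)$ on the support so that the Duhamel integral is bounded below, and the resulting norm ratios producing exactly the exponents $-\tfrac14$, $-\tfrac12$, $\tfrac32$ -- is only asserted, not carried out; but the first issue must be fixed before those computations can even be set up correctly.
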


The global well-posedness result for $D=2$ in the case of $L^2$-Schr\"odinger 
data is the following
\begin{theorem}
\label{Theorem 3.2}
The Klein - Gordon - Schr\"odinger system 
(\ref{0.1}),(\ref{0.2}),(\ref{0.3}) in 2D is globally well-posed for data 
$$ u_0 \in H^s({\mathbb R}^2) \, , \, n_0 \in H^{\sigma}({\mathbb R}^2) \, , \,  
n_1 
\in H^{\sigma -1}({\mathbb R}^2)$$
under the assumptions
$$ s \ge 0 \, , \, s - \frac{1}{2} \le \sigma < s+\frac{3}{2} \, ,$$ 
 i.e. for any 
$ T > 0 $ there exists a unique 
solution
$$ u \in X^{s,\frac{1}{2}+}[0,T] \, , $$
$$ n \in X_+^{\sigma,\frac{1}{2}+}[0,T] + 
X_-^{\sigma,\frac{1}{2}+}[0,T] \, , \, \partial_t n  \in X_+^{\sigma 
-1,\frac{1}{2}+}[0,T] + X_-^{\sigma -1,\frac{1}{2}+}[0,T] \, . $$
This solution has the property
$$ u \in C^0([0,T],H^s({\mathbb R}^2)) \, , \,  n \in 
C^0([0,T],H^{\sigma}({\mathbb 
R}^2)) \, , \, \partial_t n \in 
C^0([0,T],H^{\sigma -1}({\mathbb R}^2)) \, . $$
Under the additional assumption $\sigma \ge 0$ we also have (unconditional) 
uniqueness in these latter spaces,
especially there exists a unique global classical solution for smooth data.
\end{theorem}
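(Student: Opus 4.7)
My plan is to bootstrap the local solution from Theorem~\ref{Theorem 2.1} to any interval $[0,T]$ by establishing an a priori polynomial-in-$t$ bound on
$$ \|u(t)\|_{H^s} + \|n(t)\|_{H^\sigma} + \|\partial_t n(t)\|_{H^{\sigma-1}}. $$
Since the local existence time in Theorem~\ref{Theorem 2.1} depends continuously on the data norms, such a bound gives a uniform lower bound on the local step size and hence permits iteration to cover all of $[0,T]$. At the regularities considered ($s \ge 0$, but no $H^1$-energy available) the only conserved quantity at our disposal is charge, $\|u(t)\|_{L^2} = \|u_0\|_{L^2}$.

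Following Bourgain and \cite{CHT}, the main idea is to split the wave component into its free evolution plus a smoother Duhamel remainder,
$$ n_{\pm}(t) \;=\; e^{\mp i t A^{1/2}} n_{\pm 0} \;\pm\; i \int_0^t e^{\mp i(t-\tau) A^{1/2}} A^{-1/2}(|u|^2)(\tau)\, d\tau \;=:\; n_{\pm}^{\mathrm{lin}}(t) + n_{\pm}^{\mathrm{inh}}(t). $$
The linear piece $n_{\pm}^{\mathrm{lin}}$ preserves its $H^\sigma$-norm, so the a priori estimate reduces to controlling $n_{\pm}^{\mathrm{inh}}$. Thanks to the operator $A^{-1/2}$, this remainder gains one derivative and is estimable in $H^{\sigma+1}$ as soon as $|u|^2$ is controlled in $H^\sigma$. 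When $\sigma \le 1$, this follows from Sobolev embedding together with the 2D Schr\"odinger Strichartz pair $L^4_{t,x}$; for $\sigma$ close to $s+\tfrac{3}{2}$ one uses instead the Bourgain-space bilinear estimate
$$ \|\,|u|^2\,\|_{X_{\pm}^{\sigma-1,-1/2+}} \lesssim \|u\|_{X^{s,1/2+}}^2 $$
that already underlies Theorem~\ref{Theorem 2.1}. Closing a Gronwall loop between the Strichartz (or Bourgain) norm of $u$---controlled via $\|u_0\|_{L^2}$ and the coupling term $nu$---and the $H^\sigma$-norm of $n$ yields the desired polynomial-in-$T$ bound.

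Persistence of regularity for $s > 0$ is a consequence of the equation $i \partial_t u + \Delta u = nu$ via standard product estimates, threaded through the same Gronwall loop. Unconditional uniqueness for $\sigma \ge 0$ is obtained by applying the bilinear estimates of Theorem~\ref{Theorem 2.1} to the difference of two solutions, along the lines of \cite{P}. The main obstacle I expect will be the rough endpoint $\sigma = -\tfrac{1}{2}$: the one-derivative gain from $A^{-1/2}$ barely places $n_{\pm}^{\mathrm{inh}}$ in $L^2$, so crude Sobolev estimates are insufficient and the sharp 2D bilinear estimates of \cite{BHHT} have to be invoked directly. A symmetric difficulty arises at the opposite end $\sigma \to s + \tfrac{3}{2}$, where $|u|^2$ must be placed at high regularity and the full Bourgain bilinear estimate is needed to close the loop.
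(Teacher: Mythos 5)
Your overall strategy---reduce to $s=0$ by persistence of regularity, exploit charge conservation, and control the growth of $\|n_\pm(t)\|_{H^\sigma}$ through the Duhamel term, using Strichartz estimates in the moderate range and the sharp bilinear estimates of \cite{BHHT} near the endpoints---is the same as the paper's. But there is a genuine gap at the crux of the argument: the ``Gronwall loop'' that is supposed to produce a polynomial-in-$t$ a priori bound is asserted, not closed, and as stated it does not close. The difficulty is that for $s=0$ and $\sigma$ outside the narrow window $\sigma<\tfrac14$ (where the Klein--Gordon Strichartz estimate lets you bound the Duhamel term by $T^{3/4-}\|u_0\|_{L^2}^2$ alone, with no feedback), the quantity you need to control $|u|^2$ in $H^{\sigma-1}$ (note: $H^{\sigma-1}$, not $H^{\sigma}$ as written) is a space--time norm of $u$ ($L^4_{t,x}$ or $X^{0,\frac12+}$) which is \emph{not} controlled by $\|u_0\|_{L^2}$ on long intervals: it is controlled only on local intervals of length $\delta$ with $\delta$ shrinking as a negative power of $\|n_\pm\|_{H^\sigma}$, because closing the fixed point for the $u$-equation requires $T^l\|n_\pm\|_{H^\sigma}\lesssim 1$. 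The growth of $\|n_\pm\|_{H^\sigma}$ per step is then $\delta^k\|u_0\|_{L^2}^2$, and whether the iteration survives depends entirely on the exponents: the time needed for $\|n_\pm\|_{H^\sigma}$ to double is $\sim \delta^{1-k}\|n_{\pm0}\|_{H^\sigma}/\|u_0\|^2_{L^2}\sim \|n_{\pm0}\|_{H^\sigma}^{(k+l-1)/l}/\|u_0\|_{L^2}^2$, which is bounded below independently of $\|n_{\pm0}\|_{H^\sigma}$ only if $k+l>1$. This is the quantitative heart of the proof, and your sketch never confronts it; note also that what comes out is a doubling-time (hence exponential-in-$t$) bound, not a polynomial one.

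Concretely, the paper supplies this missing input in two steps. Proposition 3.1 is a re-proved local theory that tracks the constants, in particular the clean bound $\|n_\pm(t)\|_{H^\sigma}\le\|n_{\pm0}\|_{H^\sigma}+cT^k\|u_0\|_{L^2}^2$ with no constant on the first term. Proposition 3.2 then produces the exponents $k,l\ge\tfrac14-$ with $k+l\ge\tfrac54-$ for the whole range $-\tfrac12\le\sigma<\tfrac32$ by \emph{interpolating} between the crude H\"older/Sobolev estimate (which gives the strong factor $T^{1-}$ but only at wave regularity $\sigma=1+$) and the sharp $\sigma=-\tfrac12$ estimate of Proposition 2.1 (which gives only $T^{1/4-}$). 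Your proposal treats these two regimes as alternatives (``Strichartz for $\sigma\le1$, Bourgain near $\sigma=s+\tfrac32$''), but neither endpoint estimate alone carries enough power of $T$ at intermediate $\sigma$; it is the interpolated family, with $T$-exponent varying linearly in $\sigma$, that makes $k+l>1$ hold uniformly. Without this, your iteration could in principle terminate in finite time. The unconditional uniqueness claim is also under-specified: the paper does not apply the bilinear estimates to a difference of solutions directly, but first upgrades an arbitrary $C^0_tL^2\times C^0_tL^2\times C^0_tH^{-1}$ solution to $X^{-\frac12-,\frac12+}\times X^{0-,1}_\pm$ via the equation and interpolation (Zhou's argument), and only then invokes a dedicated bilinear estimate (Proposition 2.2') at these rough regularities.
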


A global well-posedness result in $2D$ and also in $3D$ in the range $0 \le s 
\le \sigma \le s+1$ can be proven without using Bourgain type spaces but only 
Strichartz type estimates.

\begin{theorem}
\label{Theorem 5.1} Let the space dimension $D$ be 2 or 3.
Assume $0 \le s \le \sigma \le s+1$ and $u_0 \in H^s$, $n_0 \in H^{\sigma}$ , $ 
n_1 \in H^{\sigma -1}$. Then the Klein - Gordon - Schr\"odinger system 
(\ref{0.1}),(\ref{0.2}),(\ref{0.3}) is globally well-posed, i.e. for any 
$T>0$ there exists a unique solution 
$$u \in C^0([0,T],H^s) \cap \bigcap_{2\le r < \infty , 2 \le q \le \infty} 
L^q((0,T),H^{s,r}) \, , $$
$$ n \in C^0([0,T],H^{\sigma}) \, , \, \partial_t n \in C^0([0,T];H^{\sigma 
-1}) 
\, , $$ 
where $ \frac{2}{q} + \frac{D}{r} = \frac{D}{2}$ .
\end{theorem}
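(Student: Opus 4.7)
The plan is to prove Theorem \ref{Theorem 5.1} in two stages: local well-posedness via a Strichartz-based contraction, followed by globalization using charge conservation together with a Gronwall-type a priori bound on the Klein-Gordon norms.

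For the local theory I would run a Picard iteration on the Duhamel system (\ref{8})--(\ref{9}) in the product space
\[
Y_T \times Z_T := \Bigl( C^0([0,T],H^s) \cap \bigcap_{(q,r)\,\text{adm.}} L^q((0,T),H^{s,r}) \Bigr) \times \Bigl( C^0([0,T],H^\sigma) \cap C^1([0,T],H^{\sigma-1}) \Bigr),
\]
with $(q,r)$ Schr\"odinger-admissible and $2\le r<\infty$. The standard $H^s$-valued Strichartz inequality applied to (\ref{8}) controls $\|u\|_{Y_T}$ by $\|u_0\|_{H^s}$ plus a dual-Strichartz norm of $nu$; the energy inequality for Klein-Gordon applied to (\ref{9}) controls $\|(n,\partial_t n)\|_{Z_T}$ by the data plus $\int_0^T\||u|^2\|_{H^{\sigma-1}}\,d\tau$ (using that $A^{-1/2}\colon H^{\sigma-1}\to H^\sigma$ is bounded). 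Closing the contraction thus reduces to two bilinear estimates of the form
\[
\|nu\|_{L^{\tilde q'}((0,T),H^{s,\tilde r'})}\lesssim T^{\theta_1}\|n\|_{L^\infty_T H^\sigma}\|u\|_{Y_T},\qquad \||u|^2\|_{L^1((0,T),H^{\sigma-1})}\lesssim T^{\theta_2}\|u\|_{Y_T}^2,
\]
for some positive $\theta_1,\theta_2$. I would prove these by H\"older in time and space, Sobolev embedding and the Kato-Ponce fractional Leibniz rule; the hypothesis $0\le s\le\sigma\le s+1$ in $D\in\{2,3\}$ is exactly tuned so that both estimates close: the lower bound $\sigma\ge s$ lets $n$ absorb $s$ derivatives onto $u$, while the upper bound $\sigma\le s+1$ means $|u|^2$ needs one fewer derivative than the Strichartz regularity available for $u$.

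For globalization, charge conservation $\|u(t)\|_{L^2}=\|u_0\|_{L^2}$ is the key input. Duhamel for the Klein-Gordon equation yields
\[
\|n(t)\|_{H^\sigma}+\|\partial_t n(t)\|_{H^{\sigma-1}}\lesssim \|n_0\|_{H^\sigma}+\|n_1\|_{H^{\sigma-1}}+\int_0^t\||u(\tau)|^2\|_{H^{\sigma-1}}\,d\tau,
\]
and Gagliardo-Nirenberg interpolation between the conserved $L^2$ norm and $\|u(\tau)\|_{H^s}$ bounds the integrand by $C\|u(\tau)\|_{H^s}^{2\lambda}\|u_0\|_{L^2}^{2(1-\lambda)}$ for a suitable $\lambda=\lambda(s,\sigma,D)\in[0,1]$. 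For $\|u\|_{H^s}$ itself, I would apply the Strichartz estimate together with the first bilinear inequality on short subintervals to obtain $\|u\|_{Y_\tau}\le 2C\|u_0\|_{H^s}$ whenever $\tau$ is small compared to the current $\|n\|_{L^\infty_\tau H^\sigma}$; iterating this over successive slices and closing via Gronwall against the Klein-Gordon inequality above prevents any norm from exploding in finite time. Uniqueness in the stated class propagates from the local contraction.

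The main obstacle I expect is closing the bilinear estimate on $nu$ at the lowest-regularity endpoint $s=\sigma=0$, especially in $D=3$, where neither factor belongs to an algebra and the product must be treated by pure Strichartz: there I would use the diagonal admissible pair $(q,r)=(10/3,10/3)$ (resp.\ $(4,4)$ in $D=2$) together with H\"older in both time and space to place $nu$ in the dual-Strichartz class. The globalization must also be arranged so that the length of each local step depends only polynomially on $\|n\|_{H^\sigma}$, so that Gronwall against the Klein-Gordon inequality yields at most exponential growth; this is the point that would make Bourgain spaces essential if one tried to push $\sigma$ below $s-\tfrac12$, but in the present range Strichartz suffices.
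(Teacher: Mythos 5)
Your local well-posedness step is essentially the paper's: a contraction in Strichartz spaces with $nu$ placed in a dual-Strichartz class by H\"older and Sobolev, and $|u|^2$ in $L^1_tH^{\sigma-1}$; that part is fine and the range $0\le s\le\sigma\le s+1$ is used exactly as you describe. The genuine gap is in your globalization. First, the pointwise-in-time bound $\||u(\tau)|^2\|_{H^{\sigma-1}}\lesssim\|u(\tau)\|_{H^s}^{2\lambda}\|u_0\|_{L^2}^{2(1-\lambda)}$ fails in part of the claimed range: at $s=0$, $\sigma=1$, $D=3$ it would require $\|u\|_{L^4_x}^2\lesssim\|u\|_{L^2_x}^2$, and there is no $H^s$ norm to interpolate against. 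The paper's bound $\||u|^2\|_{L^1_TH^{\sigma-1}}\lesssim T^{\theta}\|u\|_{L^{q_0}_TL^{r_0}}^2$ is intrinsically a space-time Strichartz estimate, recovered over each local interval from the contraction, not a fixed-time Sobolev product estimate; your Gronwall input is therefore not available as stated.

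Second, and more structurally, the Gronwall coupling you propose between $\|u\|_{H^s}$ and $\|n\|_{H^\sigma}$ does not close. Each local step of length $\tau\sim\|n\|_{H^\sigma}^{-a}$ only gives $\|u\|_{L^\infty_\tau H^s}\le C\|u_0\|_{H^s}$ with an absolute constant $C>1$, so over $T/\tau$ steps $\|u\|_{H^s}$ can grow like $C^{T\|n\|_{H^\sigma}^a}$; feeding this back into $\|n\|_{H^\sigma}\lesssim\ \mathrm{data}+\int\|u\|_{H^s}^{2\lambda}\cdots$ produces an ODE of the type $N'\gtrsim e^{cN^a}$, which blows up in finite time -- Gronwall yields nothing here. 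The paper sidesteps this entirely: by persistence of regularity it reduces to $s=0$, where the Schr\"odinger norm is \emph{exactly} conserved, so the only growing quantity is $\|n_\pm\|_{H^\sigma}$, and its increment per local step is \emph{additive}, at most $cT^k\|u_0\|_{L^2}^2$ with no implicit constant in front of $\|n_{\pm0}\|_{H^\sigma}$ (estimate (\ref{13'}) resp.\ (\ref{5.6})). Hence $m\sim\|n_{\pm0}\|_{H^\sigma}/(T^k\|u_0\|_{L^2}^2)$ steps are needed before the norm doubles, and the elapsed time $mT$ is bounded below independently of $\|n_{\pm0}\|_{H^\sigma}$. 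This Colliander--Holmer--Tzirakis doubling argument, not Gronwall, is the missing idea; without it (or some substitute) your scheme does not exclude finite-time blow-up of $\|n\|_{H^\sigma}$.
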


For negative $\sigma$ we have to use Bourgain type spaces again.
\begin{theorem}
\label{Theorem 1.6'}
The Klein - Gordon - Schr\"odinger system 
(\ref{0.1}),(\ref{0.2}),(\ref{0.3}) in 3D is globally well-posed for data 
$$ u_0 \in H^s({\mathbb R}^3) \, , \, n_0 \in H^{\sigma}({\mathbb R}^3) \, , \,  
n_1 
\in H^{\sigma -1}({\mathbb R}^3)$$
under the assumptions
$$ s \ge 0 \, , \, s - \frac{1}{2} < \sigma < s \, ,$$ 
 i.e. for any 
$ T > 0 $ there exists a unique 
solution
$$ u \in X^{s,\frac{1}{2}+}[0,T] \, , $$
$$ n \in X_+^{\sigma,\frac{1}{2}+}[0,T] + 
X_-^{\sigma,\frac{1}{2}+}[0,T] \, , \, \partial_t n  \in X_+^{\sigma 
-1,\frac{1}{2}+}[0,T] + X_-^{\sigma -1,\frac{1}{2}+}[0,T] \, . $$
This solution has the property
$$ u \in C^0([0,T],H^s({\mathbb R}^3)) \, , \,  n \in 
C^0([0,T],H^{\sigma}({\mathbb 
R}^3)) \, , \, \partial_t n \in 
C^0([0,T],H^{\sigma -1}({\mathbb R}^3)) \, . $$
\end{theorem}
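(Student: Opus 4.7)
The plan is to extend the local well-posedness of \cite{P} to a global result by combining the conservation of $\|u(t)\|_{L^2}$ with a smoothing property of the wave Duhamel operator. Under the hypotheses $s \geq 0$ and $s - \frac{1}{2} < \sigma < s$, all of the local conditions of \cite{P} (namely $s > -\frac{1}{4}$, $\sigma > -\frac{1}{2}$, $\sigma - 2s < \frac{3}{2}$, $\sigma - 2 < s < \sigma + 1$) are satisfied, giving a unique local solution on $[0,T_0]$ with $T_0 = T_0(\|u_0\|_{H^s}, \|n_{\pm 0}\|_{H^\sigma})$. By the standard blow-up alternative, global existence will follow once one has polynomial-in-time a priori bounds on $\|u(t)\|_{H^s}$ and $\|n_\pm(t)\|_{H^\sigma}$.

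I would split the wave part as $n_\pm = n_\pm^l + n_\pm^{nl}$, where $n_\pm^l(t) := e^{\mp itA^{1/2}} n_{\pm 0}$ is the free-wave evolution (so that $\|n_\pm^l(t)\|_{H^\sigma} = \|n_{\pm 0}\|_{H^\sigma}$ for all $t$) and $n_\pm^{nl}$ solves the Duhamel equation with zero initial data and source $\pm A^{-\frac{1}{2}}(|u|^2)$. The central technical ingredient is a refined mixed-regularity bilinear estimate
$$ \|A^{-\frac{1}{2}}(u\bar v)\|_{X^{\sigma, -\frac{1}{2}+}_\pm} \lesssim \|u\|_{X^{0, \frac{1}{2}+}} \|v\|_{X^{s, \frac{1}{2}+}}, $$
valid in 3D precisely in the range $\sigma > s - \frac{1}{2}$. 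This may be proved by dyadic frequency/modulation decomposition, essentially interpolating between the sharp endpoint Zakharov estimate of Bejenaru--Herr \cite{BH} at $(s,\sigma) = (0, -\frac{1}{2})$ and the diagonal case $\sigma = s \geq 0$ of \cite{P}. The point is that it replaces the naive quadratic bound on $\|n_\pm^{nl}\|_{X^{\sigma,\frac{1}{2}+}_\pm}$ with one that is linear in $\|u\|_{X^{s,\frac{1}{2}+}}$, the remaining factor being controlled by the conserved charge $\|u_0\|_{L^2}$.

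Coupled with the standard mixed bilinear estimate $\|un\|_{X^{s,-\frac{1}{2}+}} \lesssim \|u\|_{X^{s,\frac{1}{2}+}} \|n\|_{X^{\sigma,\frac{1}{2}+}_\pm}$ from the local theory, this yields on each local interval a Gronwall-type inequality in which the nonlinear feedback into $\|u\|_{H^s}$ is only linear in $\|u\|_{H^s}$ (with coefficient involving $\|u_0\|_{L^2}$ and $\|n_{\pm 0}\|_{H^\sigma}$). Iterating over $O(T/T_0)$ intervals then produces polynomial-in-$T$ growth of $\|u(T)\|_{H^s}$, and consequently of $\|n_\pm(T)\|_{H^\sigma}$, giving a solution on $[0,T]$ for arbitrary $T > 0$. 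This is the 3D analogue of the scheme of Colliander--Holmer--Tzirakis \cite{CHT} and of the proof of Theorem \ref{Theorem 3.2}.

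The main technical obstacle is establishing the refined mixed bilinear estimate in its sharp range $\sigma > s - \frac{1}{2}$: this requires a careful case analysis in frequency and modulation, relying on the Bejenaru--Herr bilinear tools \cite{BH}. The strict inequality (no endpoint) reflects a genuine failure at $\sigma = s - \frac{1}{2}$ in 3D, in contrast with the 2D situation of Theorem \ref{Theorem 3.2} where the endpoint is attained.
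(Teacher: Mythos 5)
Your overall strategy (local theory of \cite{P} plus conservation of charge plus an a priori bound on the wave part) is the right one, but the step that carries all the weight is both unproved and, as stated, false. The claimed estimate $\|A^{-1/2}(u\bar v)\|_{X^{\sigma,-1/2+}_{\pm}}\lesssim \|u\|_{X^{0,1/2+}}\|v\|_{X^{s,1/2+}}$ cannot be ``valid precisely in the range $\sigma>s-\frac{1}{2}$'': lowering $\sigma$ only makes such an estimate easier, so its region of validity must be cut off from above, not below. Concretely, in the interaction where the $L^2$-factor carries a large frequency $N$ and the $H^s$-factor a bounded frequency, the output frequency is $\sim N$, the available weights are $N^{\sigma-1}$ on the output and $N^0$ on the high input, and the only gain is $L_{\max}^{1/2-}\lesssim N^{1-}$ from the resonance identity; this forces an upper bound $\sigma<\frac{3}{2}$ independent of $s$ (the analogue of the condition $\sigma-2s<\frac{3}{2}$ of \cite{P} with one of the two $s$'s set to zero). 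Since in $|u|^2=u\bar u$ you cannot choose which factor carries the high frequency, your estimate fails outright for, say, $s=5$, $\sigma=4.9$, which the theorem allows. The condition $\sigma>s-\frac{1}{2}$ has nothing to do with this bilinear estimate: it is precisely the local well-posedness threshold $\sigma>-\frac{1}{2}$ of \cite{P} after reducing to $s=0$. Moreover your argument never uses the hypothesis $\sigma<s$, which is the other half of the theorem and cannot be dispensed with (see the Remark following the statement: even $1<\sigma<\frac{3}{2}$ at $s=0$ is open in 3D), and the asserted ``interpolation'' between the Bejenaru--Herr endpoint and the diagonal case is not a legitimate way to produce an off-diagonal bilinear estimate of this type.

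The paper's proof is far simpler and needs no new bilinear estimate. By persistence of regularity one reduces to $s=0$, $-\frac{1}{2}<\sigma<0$. Then $\|u(t)\|_{L^2}$ is conserved, and by the local theory of \cite{P} it suffices to bound $\|n(t)\|_{H^\sigma}+\|\partial_t n(t)\|_{H^{\sigma-1}}$ a priori on $[0,T]$. Since $\sigma<0$ one has the Sobolev embedding $L^1({\mathbb R}^3)\subset H^{\sigma-,1+}$, and the inhomogeneous Klein--Gordon Strichartz estimate (\ref{I''}) with $\tilde q=2+$, $\tilde r=\infty-$, $\rho=0-$ gives
$$\|n\|_{L^\infty((0,T),H^\sigma)}\lesssim \|n_0\|_{H^\sigma}+\|n_1\|_{H^{\sigma-1}}+\||u|^2\|_{L^{2-}((0,T),L^1)}\lesssim \|n_0\|_{H^\sigma}+\|n_1\|_{H^{\sigma-1}}+T^{\frac{1}{2}+}\|u_0\|_{L^2}^2\,,$$
so the conserved charge controls the wave source directly; no Gronwall iteration, no splitting into linear and nonlinear parts, and no refined $X^{s,b}$ estimate are required. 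This is exactly where $\sigma<s$ enters. If you want to rescue your scheme, you must first perform the reduction to $s=0$ (otherwise the asymmetric estimate you need is unavailable) and then exploit $\sigma<0$; at that point the estimate you call ``refined'' is already contained in \cite{P}, and the argument collapses to the one above.
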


\noindent {\bf Remark:} It would be desirable to have a similar result in the 
case $s=0$ , $1 < \sigma < \frac{3}{2}$ as in the case $D=2$, but our estimates 
given in spaces of Bourgain type seem to be not quite strong enough to prove 
this.

Combining this with the unconditional uniqueness result \cite{P} in the case 
$s=\sigma =0$ we also get

\begin{theorem}
\label{Theorem 5.2}
Assume $s,\sigma \ge 0$ and $s-\frac{1}{2} < \sigma \le s+1$ and $u_0 \in H^s$, 
$n_0 \in H^{\sigma}$ , $ 
n_1 \in H^{\sigma -1}$.
Then the Klein - Gordon - 
Schr\"odinger system 
(\ref{0.1}),(\ref{0.2}),(\ref{0.3}) in 3D has a unique global solution 
$$u \in C^0([0,T],H^s)  \, , \,
n \in C^0([0,T],H^{\sigma}) \, , \, \partial_t n \in C^0([0,T];H^{\sigma -1}) 
\, 
. $$ 
\end{theorem}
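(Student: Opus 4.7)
The plan is to read Theorem \ref{Theorem 5.2} as the assembly of the two global existence results in the paper with the unconditional uniqueness at $s=\sigma=0$ in $D=3$ from \cite{P}. The hypothesis $s,\sigma\ge 0$ is precisely what lets me embed solutions at the prescribed regularity into the lower-regularity class where that uniqueness statement is available, so nothing genuinely new needs to be estimated.

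For existence I would split on the location of $\sigma$. When $s\le\sigma\le s+1$, Theorem \ref{Theorem 5.1} directly yields a global solution with the required $C^0$ regularity (recall $D=3$ is covered there). When $s-\tfrac{1}{2}<\sigma<s$, Theorem \ref{Theorem 1.6'} provides a global solution in the Bourgain spaces $X^{s,1/2+}\times(X_+^{\sigma,1/2+}+X_-^{\sigma,1/2+})\times(X_+^{\sigma-1,1/2+}+X_-^{\sigma-1,1/2+})$; the standard embedding $X^{m,1/2+}\hookrightarrow C^0_tH^m$, combined with the identifications $n=\tfrac{1}{2}(n_++n_-)$ and $\partial_t n=\tfrac{1}{2i}A^{1/2}(n_+-n_-)$ recalled in the introduction, places this solution in the $C^0$ spaces claimed by Theorem \ref{Theorem 5.2}. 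The case $\sigma=s$ is included in the first subcase, and the two subranges together exactly cover $s-\tfrac{1}{2}<\sigma\le s+1$.

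For uniqueness, let $(u^{j},n^{j},\partial_t n^{j})$, $j=1,2$, be two solutions in $C^0([0,T],H^s)\times C^0([0,T],H^\sigma)\times C^0([0,T],H^{\sigma-1})$ sharing the data $(u_0,n_0,n_1)$. Since $s,\sigma\ge 0$, the Sobolev embeddings $H^s\hookrightarrow L^2$, $H^\sigma\hookrightarrow L^2$, and $H^{\sigma-1}\hookrightarrow H^{-1}$ let me view both solutions in the weaker class $C^0([0,T],L^2)\times C^0([0,T],L^2)\times C^0([0,T],H^{-1})$, with common data now in $L^2\times L^2\times H^{-1}$. Applying the unconditional uniqueness theorem of \cite{P} at $s=\sigma=0$ in $D=3$ forces $(u^{1},n^{1},\partial_t n^{1})=(u^{2},n^{2},\partial_t n^{2})$ on $[0,T]$, and globality is then automatic because $T$ was arbitrary.

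The only analytically nontrivial ingredient is the $s=\sigma=0$ unconditional uniqueness, which is already black-boxed by citation. The main obstacle in writing this up is therefore purely bookkeeping: verifying that the embeddings all go in the right direction so that a solution at the strong regularity is admissible for the weak uniqueness principle, and checking that the integral equations (\ref{8})--(\ref{9}) are indeed satisfied in the $L^2\times L^2\times H^{-1}$ framework (which is immediate, since all integrands make sense there under the embeddings). Once that is noted, Theorem \ref{Theorem 5.2} follows with no further estimates.
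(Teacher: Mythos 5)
Your proposal is correct and coincides with the paper's own (very brief) argument: the paper obtains Theorem \ref{Theorem 5.2} precisely by combining the global existence from Theorem \ref{Theorem 5.1} (for $s\le\sigma\le s+1$) and Theorem \ref{Theorem 1.6'} (for $s-\tfrac12<\sigma<s$) with the unconditional uniqueness of \cite{P} at $s=\sigma=0$, applied after embedding any $C^0H^s\times C^0H^\sigma\times C^0H^{\sigma-1}$ solution into the $L^2\times L^2\times H^{-1}$ class. Your write-up simply makes explicit the case split and embeddings that the paper leaves implicit.
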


Concerning the standard facts for the linear Cauchy problem (which are 
independent of the specific phase function) in spaces of Bourgain type we refer 
to \cite[Section 2]{GTV} or \cite{G}. We also use the following well-known fact 
\cite[Lemma 1.10]{G}, which we prove for the sake of completeness.

\begin{lemma}
\label{Lemma}
If $ s \in \mathbb R$ , $T\le 1$ , $0 \le b' < b < \frac{1}{2}$ or $0 \ge b > 
b' 
> - \frac{1}{2}$ , the following estimate holds:
$$
\|u\|_{X^{s,b'}[0,T]} \lesssim T^{b-b'} \|u\|_{X^{s,b}[0,T]}\, ,
$$ 

\end{lemma}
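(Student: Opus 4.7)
The strategy is to reduce the inequality to a one-variable Sobolev estimate in $t$ and prove that estimate by a direct Fourier computation; the negative-index range is then obtained by duality.

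\emph{Reduction to the real line.} I pick an extension $\tilde u$ of $u$ with $\|\tilde u\|_{X^{s,b}}\le 2\|u\|_{X^{s,b}[0,T]}$. Fix a bump $\psi\in C_c^\infty(\mathbb R)$ with $\psi\equiv 1$ on $[0,1]$ and supported in $[-1,2]$, and set $\psi_T(t):=\psi(t/T)$. Then $\psi_T\tilde u$ is still an extension of $u$, so by the definition of the restricted norm it is enough to show $\|\psi_T\tilde u\|_{X^{s,b'}}\lesssim T^{b-b'}\|\tilde u\|_{X^{s,b}}$. Setting $g_\xi(t):=e^{it|\xi|^2}\mathcal F_x\tilde u(\xi,t)$, the shift $\tau\mapsto\tau-|\xi|^2$ in the Fourier definition of $X^{s,b}$ gives $\|\tilde u\|_{X^{s,b}}^2=\int\langle\xi\rangle^{2s}\|g_\xi\|_{H^b_t}^2\,d\xi$, and because multiplication by $\psi_T(t)$ commutes with the modulation $e^{it|\xi|^2}$ and with $\mathcal F_x$, the function attached to $\psi_T\tilde u$ is just $\psi_Tg_\xi$. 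Integrating in $\xi$, the claim reduces to the one-variable inequality
$$\|\psi_Tf\|_{H^{b'}(\mathbb R)}\lesssim T^{b-b'}\|f\|_{H^b(\mathbb R)}.\qquad(\ast)$$

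\emph{Proof of $(\ast)$ for $0\le b'<b<1/2$.} Write $\widehat{\psi_Tf}=\hat\psi_T*\hat f$ with $\hat\psi_T(\tau)=T\hat\psi(T\tau)$, so $|\hat\psi_T(\tau)|\lesssim T\langle T\tau\rangle^{-N}$ for any $N$. Cauchy-Schwarz in the convolution gives
$$|\widehat{\psi_Tf}(\tau)|^2\le \|f\|_{H^b}^2\cdot K(\tau),\qquad K(\tau):=\int|\hat\psi_T(\tau-\sigma)|^2\langle\sigma\rangle^{-2b}\,d\sigma.$$
The rapid decay of $\hat\psi_T$ restricts the effective range of $\sigma$ to $|\tau-\sigma|\lesssim 1/T$. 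A case split then yields $K(\tau)\lesssim T^{1+2b}$ when $|\tau|\lesssim 1/T$ (the crucial input being $2b<1$, which makes $\langle\sigma\rangle^{-2b}$ integrable over a window of length $\sim 1/T$) and $K(\tau)\lesssim T\langle\tau\rangle^{-2b}$ otherwise. Integrating $\langle\tau\rangle^{2b'}K(\tau)$ over each region, and using both $T\le 1$ and the inequality $b-b'<1/2$ to guarantee that the high-frequency tail $\int_{|\tau|>1/T}\langle\tau\rangle^{2(b'-b)}\,d\tau$ balances the low-frequency contribution, I obtain $\int\langle\tau\rangle^{2b'}K(\tau)\,d\tau\lesssim T^{2(b-b')}$, which is $(\ast)$.

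\emph{The negative range.} For $0\ge b>b'>-1/2$, the $L^2_{tx}$ pairing puts $X^{s,b'}[0,T]$ in duality with $X^{-s,-b'}[0,T]$ (a routine consequence of the definition via extensions). The estimate then follows from the analogous bound $\|v\|_{X^{-s,-b}[0,T]}\lesssim T^{b-b'}\|v\|_{X^{-s,-b'}[0,T]}$, which falls in the positive range already established since $0\le -b<-b'<1/2$ and the exponent gap is preserved. The main technical point, and the reason the lemma fails at $b=\pm 1/2$, is the kernel estimate on $K(\tau)$ in the regime $|\tau|\lesssim 1/T$: integrating the fractional singularity $\langle\sigma\rangle^{-2b}$ against a bump of mass $\sim T$ concentrated on scale $1/T$ uses $2b<1$ essentially.
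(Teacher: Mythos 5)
Your reduction to the one–variable inequality $\|\psi_Tf\|_{H^{b'}}\lesssim T^{b-b'}\|f\|_{H^b}$ and the treatment of the negative range by duality are exactly the paper's route. The problem is your proof of that one–variable inequality: it has a genuine gap at the high frequencies. After Cauchy--Schwarz you need $\int\langle\tau\rangle^{2b'}K(\tau)\,d\tau\lesssim T^{2(b-b')}$, and your own bound $K(\tau)\lesssim T\langle\tau\rangle^{-2b}$ for $|\tau|\gtrsim 1/T$ (which is sharp, since $\hat\psi(0)=\int\psi\neq 0$ forces $K(\tau)\gtrsim T\langle\tau\rangle^{-2b}$ there) gives a tail contribution $T\int_{|\tau|>1/T}\langle\tau\rangle^{2(b'-b)}\,d\tau$. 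Because $0\le b'<b<\tfrac12$ we have $-1<2(b'-b)<0$, so this integral is $+\infty$; convergence would require $b-b'>\tfrac12$, the opposite of the condition you invoke. There is no "balancing" to be had: the pointwise Cauchy--Schwarz bound saturates only when $\hat f$ concentrates near the given $\tau$, and integrating it in $\tau$ pretends this happens at every frequency simultaneously, which is intrinsically too lossy for this estimate. So the key step $(\ast)$ is not established.

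Two correct ways to close the gap. The paper's way: apply the 1D Sobolev product law $\|fg\|_{H^{b'}}\lesssim\|f\|_{H^{s_1}}\|g\|_{H^{s_2}}$ with $s_1=\tfrac12-(b-b')$, $s_2=b$ (admissible exactly because $b'<b<\tfrac12$ and $s_1+s_2-\tfrac12=b'$), together with the scaling $\|\psi_T\|_{H^{1/2-(b-b')}}\lesssim T^{b-b'}$ for $T\le1$. An elementary alternative in the same spirit as your computation: first prove the endpoint $\|\psi_Tf\|_{L^2}\le\|\psi_T\|_{L^{1/b}}\|f\|_{L^{2/(1-2b)}}\lesssim T^{b}\|f\|_{H^b}$ (H\"older plus the 1D Sobolev embedding, using $b<\tfrac12$), prove the no-gain bound $\|\psi_Tf\|_{H^{b}}\lesssim\|f\|_{H^{b}}$ uniformly in $T\le1$, and interpolate to get the exponent $b-b'$ at the intermediate regularity $b'$. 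Either repair keeps the rest of your argument (the conjugation by $e^{it|\xi|^2}$ and the duality step) intact.
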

\begin{proof}
Let $\psi$ be a smooth time-cutoff function , $\psi_T(t) = \psi(\frac{t}{T})$. 
and $0 \le b' < b < \frac{1}{2}$. By the well-known Sobolev multiplication law 
in 1D we get for $0 \le s < s_1,s_2$ and $s \le s_1 + s_2 -\frac{1}{2}$:
$$ \|fg\|_{H^s} \lesssim \|f\|_{H^{s_1}} \|g\|_{H^{s_2}} \, . $$
Thus
$$ \|\psi_T u\|_{H^{b'}_t} \lesssim \|\psi_T\|_{H^{\frac{1}{2}-(b-b')}} 
\|u\|_{H^b_t} \lesssim T^{b-b'} \|u\|_{H^{b}_t} \, , $$
so that
$$ \|\psi_T u\|_{X^{s,b'}} \lesssim \|e^{-it\Delta} \psi_T u \|_{H^{b'}_t 
H^s_x} 
\lesssim T^{b-b'} \|e^{-it\Delta} u\|_{H^b_t H^s_x} = T^{b-b'} \|u\|_{X^{s,b}} 
\, , $$
which is enough to prove the claimed estimate.
The case $0 \ge b > b' > - \frac{1}{2}$ follows by duality.
\end{proof}
This obviously also holds for the spaces $X_{\pm}^{s,b}$.

The wellknown Strichartz estimates are collected in 
\begin{prop}
(Schr\"odinger equation) \\
Let $2 \le q,\tilde{q} \le \infty$ , $2\le r,\tilde{r} \le \infty$ (excluding 
$r,\tilde{r} =\infty$ in the case $D=2$), $\frac{2}{q} + \frac{D}{r} = 
\frac{D}{2},$  $\frac{2}{\tilde{q}} + \frac{D}{\tilde{r}} = \frac{D}{2}$ , 
$\frac{1}{\tilde{r}} + \frac{1}{\tilde{r}'} = 1 = \frac{1}{\tilde{q}} + 
\frac{1}{\tilde{q}'} $.
Then for any interval $I=(0,T)$:
\begin{eqnarray}
\label{H'}
\|e^{\pm it\Delta} u_0\|_{L_t^q(I,L^r_x)} & \lesssim & \|u_0\|_{L^2_x} \, ,\\
\label{I'}
\| \int_0^t e^{\pm i(t-s)\Delta} u(s) ds \|_{L^q_t(I, L^r_x)} & \lesssim & 
\|u\|_{L^{\tilde{q}'}_t(I, L^{\tilde{r}'}_x)} \, .
 \end{eqnarray}
(Klein-Gordon equation) for $D \ge 2$ :\\
Let $2 \le q,\tilde{q} \le \infty$ , $2\le r,\tilde{r} < \infty$ , $\frac{2}{q} 
+ \frac{D-1}{r} = 
\frac{D-1}{2},$  $\frac{2}{\tilde{q}} + \frac{D-1}{\tilde{r}} = \frac{D-1}{2}$ 
, 
$\frac{1}{\tilde{r}} + \frac{1}{\tilde{r}'} = 1 = \frac{1}{\tilde{q}} + 
\frac{1}{\tilde{q}'} $, $\mu = D(\frac{1}{2} - \frac{1}{r})-\frac{1}{q}$ , $ \mu 
= 1 + \rho 
-D(\frac{1}{2}-\frac{1}{\tilde{r}}) + \frac{1}{\tilde{q}} $.
Then for any interval $I=(0,T)$:
\begin{eqnarray}
\label{H}
\|e^{\pm it(1-\Delta)^{\frac{1}{2}}} u_0\|_{L^q_t (I,L^r_x)} & \lesssim & 
\|u_0\|_{H^{\mu}_x} \, ,\\
\label{I''}
\| \int_0^t e^{\pm i(t-s)(1-\Delta)^{\frac{1}{2}}} (1-\Delta)^{-\frac{1}{2}} 
u(s) ds \|_{L^q_t(I, L^r_x)} & \lesssim & 
\|u\|_{L^{\tilde{q}'}_t(I, H^{\rho,\tilde{r}'}_x)} \, ,
\end{eqnarray}
where the implicit constants are independent of $I$.
\end{prop}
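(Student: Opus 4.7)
The plan is to treat the two sets of estimates separately, in each case reducing the full family of space--time estimates to a single dispersive bound by the standard $TT^{*}$ duality argument and, where necessary, the Keel--Tao machinery for the endpoints. Throughout, the homogeneous estimate (\ref{H'})/(\ref{H}) is equivalent (by $TT^{*}$) to the retarded estimate
\[
\Big\| \int e^{\pm i(t-s)P} F(s)\,ds \Big\|_{L^q_t L^r_x} \lesssim \|F\|_{L^{q'}_t L^{r'}_x},
\]
for $P=-\Delta$ or $P=(1-\Delta)^{1/2}$, and once this is known in a diagonal form it extends to the off-diagonal inhomogeneous bound by a Christ--Kiselev type argument (for non-endpoint exponents) or by the abstract bilinear form in Keel--Tao.

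For the Schr\"odinger part I would invoke the classical dispersive estimate $\|e^{it\Delta}\|_{L^{1}\to L^{\infty}}\lesssim |t|^{-D/2}$, interpolate with the trivial $L^{2}\to L^{2}$ unitarity to get $\|e^{it\Delta}\|_{L^{r'}\to L^{r}}\lesssim |t|^{-D(1/2-1/r)}$, and then apply the Hardy--Littlewood--Sobolev / Keel--Tao framework. This yields every pair $(q,r)$ in the admissible range $\frac{2}{q}+\frac{D}{r}=\frac{D}{2}$ with $2\le r<\infty$ in dimension $D=2$ (the $L^{\infty}_{x}$ endpoint being excluded, as stated) and all pairs including the endpoint $(2,\frac{2D}{D-2})$ in $D\ge 3$. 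The inhomogeneous estimate (\ref{I'}) then follows by duality and the Keel--Tao bilinear argument, with constants independent of the interval $I$ since $e^{it\Delta}$ is a one-parameter unitary group.

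For the Klein--Gordon part the appropriate dispersive bound is the ``half-wave'' type estimate
\[
\big\|e^{\pm it(1-\Delta)^{1/2}} f\big\|_{L^{\infty}_{x}} \lesssim (1+|t|)^{-(D-1)/2}\,\|f\|_{B^{\mu_{0}}_{1,1}}
\]
obtained via stationary phase on the phase $x\cdot\xi\pm t\langle\xi\rangle$, whose Hessian has rank $D-1$; this is the source of the $(D-1)/2$ decay rate and, after Littlewood--Paley localization and summation, of the loss of $\mu_{0}=D(\frac{1}{2}-\frac{1}{r})-\frac{1}{q}$ derivatives in (\ref{H}). The standard procedure is to localize $u_{0}$ to dyadic frequencies $|\xi|\sim 2^{k}$, on which $(1-\Delta)^{1/2}$ behaves like $|\xi|$, prove the estimate on each piece, and sum in $k$; the admissibility condition $\frac{2}{q}+\frac{D-1}{r}=\frac{D-1}{2}$ is exactly what makes the Keel--Tao criterion applicable. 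The inhomogeneous bound (\ref{I''}) is then the $TT^{*}$ version, with the $(1-\Delta)^{-1/2}$ accounting for one missing derivative on each side and the parameter $\rho$ recording the shift between input and output regularity, as forced by scaling and the identities $\mu=D(\frac{1}{2}-\frac{1}{r})-\frac{1}{q}$ and $\mu=1+\rho-D(\frac{1}{2}-\frac{1}{\tilde r})+\frac{1}{\tilde q}$.

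The main obstacle is the correct bookkeeping of derivative losses in the Klein--Gordon case: the stationary phase bound for $e^{\pm it\langle D\rangle}$ fails to map $L^{1}\to L^{\infty}$ without a loss, and one must carefully track how many derivatives are absorbed at each dyadic scale (together with the low-frequency regime where $\langle\xi\rangle\sim 1$ yields Schr\"odinger-like behaviour). Once the frequency-localized dispersive estimate is in hand, the Keel--Tao theorem delivers all listed admissible pairs uniformly in $I$, and the remaining verification is purely algebraic (checking that the exponent relations defining $\mu$ and $\rho$ close up under duality).
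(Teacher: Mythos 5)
Your proposal is correct and follows essentially the same route as the paper, which simply cites the standard references for exactly these ingredients: the frequency-localized stationary-phase dispersive bound for $e^{\pm it(1-\Delta)^{1/2}}$ (from \cite{P1}) for (\ref{H}), and the $TT^{*}$ method of \cite{GV} combined with the Christ--Kiselev lemma \cite{CK} for the inhomogeneous estimates (\ref{I'}) and (\ref{I''}). Your invocation of Keel--Tao in place of the classical $TT^{*}$/Christ--Kiselev argument is an equivalent (and at the $D=3$ Schr\"odinger endpoint $(q,r)=(2,6)$, slightly stronger) version of the same machinery, so there is nothing substantive to add.
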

In the Klein-Gordon case the proof of (\ref{H}) can be found in \cite{P1}. The 
proof of (\ref{I''}) then follows by the well-known $TT^*$ - method, as 
described in \cite{GV}, in combination with the Christ-Kiselev lemma \cite{CK}. 
In the Schr\"odinger case (\ref{I'}) follows in the same way from the standard 
estimate (\ref{H'}). 

We use the following notation. The Fourier transform is denoted by  $ \, 
\widehat{}$ or ${\mathcal F}$ , where it should be clear from the context, 
whether it is taken with 
respect to the space and time variables simultaneously or only with respect to 
the space variables. $A \lesssim B$ and $A \gtrsim B$ is shorthand for $A \le c 
B$ and $A \ge c B$, respectively, with a positive constant $c$, and $A \sim B$ 
means that $A \lesssim B$ and $A \gtrsim B$. For real numbers $a$ we denote by 
$a+$ and $a-$ a number 
sufficiently close to $a$, but
larger and smaller than $a$, respectively. \\

\section{Local well-posedness for $D=2$.}
We now formulate und prove the decisive bilinear estimates. We follow closely 
the arguments and notation from \cite{BH}.
\begin{prop}
\label{Prop. 2.1}
The following estimate holds
$$ \|un\|_{X^{0,-\frac{5}{12}-}} \lesssim \|u\|_{X^{0,\frac{5}{12}+}} 
\|n\|_{X^{-\frac{1}{2},\frac{5}{12}+}_{\pm}} \, . $$
\end{prop}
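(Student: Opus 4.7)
The plan is to mirror closely the bilinear $X^{s,b}$-machinery that [BHHT] developed for the 2D Zakharov system. A preliminary remark that makes this transfer possible is that $||\xi|-\langle\xi\rangle|\le 1$ uniformly, hence $\langle\tau\pm|\xi|\rangle\sim\langle\tau\pm\langle\xi\rangle\rangle$, so the Klein--Gordon $X^{\sigma,b}_\pm$-norms appearing here are, at the level of weights, equivalent up to universal constants to the wave $X^{\sigma,b}_\pm$-norms used for Zakharov. Thus the 2D Zakharov bilinear estimates should apply with only notational changes.

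By Plancherel and duality the claim is equivalent to the trilinear form bound
$$\Bigl|\iint u\,n\,\overline w\,dx\,dt\Bigr|\lesssim \|u\|_{X^{0,5/12+}}\|n\|_{X^{-1/2,5/12+}_\pm}\|w\|_{X^{0,5/12+}},$$
which I would rewrite on the Fourier side as a convolution integral over $\zeta_1+\zeta_2=\zeta_3$ with weights $\sigma_1^{-5/12-}$, $\langle\xi_2\rangle^{1/2}\sigma_2^{-5/12-}$, $\sigma_3^{-5/12-}$, where $\sigma_1=\langle\tau_1+|\xi_1|^2\rangle$, $\sigma_2=\langle\tau_2\pm|\xi_2|\rangle$, $\sigma_3=\langle\tau_3+|\xi_3|^2\rangle$. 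Decomposing dyadically $|\xi_j|\sim N_j$, $\sigma_j\sim L_j$, the driving identity is
$$L_1+L_2+L_3\gtrsim|H|,\qquad H:=2\xi_1\cdot\xi_2+|\xi_2|^2\mp|\xi_2|,$$
which quantifies the transversality of the Schrödinger paraboloid and the Klein--Gordon cone.

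The case analysis then follows the usual pattern. When $N_2\lesssim 1$, the factor $\langle\xi_2\rangle^{1/2}$ is harmless and the estimate closes by putting $n$ into $L^2_{tx}$ and the two Schrödinger factors into $L^4_{tx}$ via the 2D Strichartz estimate and $X^{0,1/2+}\hookrightarrow L^4_{tx}$ (after trading modulation powers using Lemma \ref{Lemma} if needed). In the high-frequency regimes the principal tool is Bourgain's bilinear $L^2_{tx}$-refinement for 2D Schrödinger: for $f,g$ with frequency caps at scales $N,M$ (say $N\le M$) and modulations $\le L_1,L_3$,
$$\|fg\|_{L^2_{tx}}\lesssim N^{1/2}(L_1 L_3)^{1/2}\|f\|_{X^{0,0}}\|g\|_{X^{0,0}},$$
combined with one free Schrödinger--wave pairing; the $\langle\xi_2\rangle^{-1/2}$ weight on $n$ is exactly what is required to absorb the $N^{1/2}$ loss in the high-high-low interaction.

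The main obstacle is the coherent/resonant regime $N_1\sim N_2\sim N_3$ with all three modulations small, where $|H|$ can vanish along a codimension-one surface and standard modulation gains are not available; at the same time, the exponent $b=5/12<1/2$ precludes the easy embedding into $L^\infty_t L^2_x$. Here I would follow [BHHT] and perform an angular decomposition in the $\xi_1,\xi_2$ variables, exploiting the fact that $H=0$ forces a quantified angle between $\xi_1$ and $\xi_2$; the $N^{1/2}$ bilinear gain is then sharp enough that, together with the sub-$1/2$ modulation powers and the $\langle\xi_2\rangle^{-1/2}$ weight, all dyadic sums over $(N_j,L_j)$ can be summed geometrically to yield the stated estimate.
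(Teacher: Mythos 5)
Your proposal follows essentially the same route as the paper: reduce by duality to a trilinear convolution form, decompose dyadically in frequency and modulation, dispose of the low wave-frequency and frequency-separated cases by (bilinear) Strichartz-type bounds, and treat the resonant high-high interaction by the angular decomposition of Bejenaru--Herr--Holmer--Tataru, whose Propositions 4.4--4.9 are exactly what the paper invokes. The one quantitative step your sketch leaves implicit is the optimization over the angular scale $A$ (summing $A^{1/2}$ below a threshold $\alpha \sim (N_1/N)^{1/2}N_1 L_{\max}^{-1/2}$ and $A^{-1/2}$ above it), which is precisely what converts the $(L_1L_2L)^{1/2}$ factors into $(L_1L_2L)^{5/12}$ and makes the exponent $5/12$ attainable.
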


Because we are going to use dyadic decompositions of $\widehat{u}$ and 
$\widehat{v}$ we take the notation from \cite{BH} and start by choosing a 
function $\psi \in C^{\infty}_0((-2,2))$ , which is even and  nonnegative with 
$\psi(r) = 1 $ for $|r|\le 1$. Defining $\psi_N(r) = \psi(\frac{r}{N}) - 
\psi(\frac{2r}{N}) $ for dyadic numbers $N=2^n \ge 2$ and $\psi_1 = \psi$ we 
have $ 1 = \sum_{N \ge 1} \psi_N $ . Thus $supp \, \psi_1 \subset[-2,2]$ and 
$supp \,
\psi_N \subset [-2N,-N/2] \cup [N/2,2N] $ for $N \ge 2$. For $f:{\mathbb R}^2 
\to 
{\mathbb C}$ we define the dyadic frequency localization operators $P_N$ by
$$ {\mathcal F}_x(P_N f)(\xi) = \psi_N(|\xi|) {\mathcal F}_x f(\xi) \, . $$
For $u:{\mathbb R}^2 \times {\mathbb R} \to {\mathbb C}$ we define the 
modulation 
localization operators
\begin{eqnarray*}
{\mathcal F}(S_Lu)(\xi,\tau) & = & \psi_L(\tau + |\xi|^2) {\mathcal 
F}u(\xi,\tau) \\
{\mathcal F}(W_L^{\pm} u)(\xi,\tau) & = & \psi_L(\tau \pm |\xi|) {\mathcal 
F}u(\xi,\tau)
\end{eqnarray*}
in the Schr\"odinger case and the wave case.

We also define an equidistant partition of unity in ${\mathbb R}$,
$$ 1 = \sum_{j \in {\mathbb
Z}} \beta_j \, , \, \beta_j(s) = \psi(s-j) (\sum_{k 
\in {\mathbb Z}} \psi(s-k))^{-1} \, . $$
Finally, for $A \in {\mathbb N}$ we define an equidistant partition of unity on 
the unit circle
$$ 1 = \sum_{j=0}^{A-1} \beta_j^A \, , \, \beta_j^A(\theta) = 
\beta_j(\frac{A\theta}{\pi}) + \beta_{j-A}(\frac{A\theta}{\pi}) \, .$$
Then $ supp (\beta_j^A) \subset \Theta_j^A$ , where
$$ \Theta_j^A := [\frac{\pi}{A}(j-2),\frac{\pi}{A}(j+2)] \cup [-\pi + 
\frac{\pi}{A}(j-2),-\pi + \frac{\pi}{A}(j-2)] \, . $$
Now we introduce the angular frequency localization operators $Q_j^A$ by
$$ {\mathcal F}(Q_j^A f)(\xi) = \beta_j^A(\theta) {\mathcal F} f(\xi) \, , $$
where $\xi = |\xi| (\cos \theta,\sin \theta)$ . For $A \in {\mathbb N}$ we can 
now decompose $u: {\mathbb R}^2 \times {\mathbb R} \to {\mathbb C} $ as
$$ u = \sum_{j=0}^{A-1} Q_j^A u \, . $$

\begin{proof} [Proof of Proposition \ref{Prop. 2.1}]  Defining
\begin{equation}
\label{I}
I(f,g_1,g_2) = \int_{*} f(\xi_3,\tau_3) g_1(\xi_1,\tau_1) 
g_2(\xi_2,\tau_2) d\xi_1 d\xi_2 d\xi_3 d\tau_1 d\tau_2 d\tau_3 \, ,
\end{equation}
where * denotes the region $\{ \sum_{i=1}^3 \xi_i = \sum_{i=1}^3 \tau_i = 0 \}$ 
we have to show
$$|I(\widehat{n},\widehat{u}_1,\widehat{u}_2)| \lesssim 
\|u_1\|_{X^{0,\frac{5}{12}+}} 
\|u_2\|_{X^{0,\frac{5}{12}+}}\|n\|_{X_{\pm}^{-\frac{1}{2},\frac{5}{12}+}} \, .
 $$
We use dyadic decompositions
$$ u_k = \sum_{N_k,L_k \ge 1} S_{L_k} P_{N_k} u_k \, , \, n = \sum_{N,L \ge 1} 
W^{\pm}_L P_N n \, . $$
Defining
$$ g_k^{L_k,N_k} = {\mathcal F} S_{L_k} P_{N_k} u_k \, , \, f^{L,N} = {\mathcal 
F} 
W^{\pm}_L P_N n $$
we have
$$ I(\widehat{n},\widehat{u}_1,\widehat{u}_2) = \sum_{N,N_1,N_2 \ge 1 \,,\, 
L,L_1,L_2 \ge 1} I(f^{L,N},g_1^{L_1,N_1},g_2^{L_2,N_2}) \, . $$
{\bf Case 1:} $ N_1 \sim N_2 \gtrsim N \ge 2^{10} $ . \\
Fix $M=2^{-4} N_1$ and decompose
\begin{eqnarray}
\nonumber
\lefteqn{ I(f^{L,N},g_1^{L_1,N_1},g_2^{L_2,N_2}) } \\
\label{HHL}
& = & \sum_{0 \le j_1,j_2 \le M-1 \, ,\, |j_1-j_2|\le 16} 
I(f^{L,N},g_1^{L_1,N_1,M,j_1}, g_2^{L_2,N_2,M,j_2}) \\
\nonumber
& & + \sum_{64 \le A \le M} \,\, \sum_{0 \le j_1,j_2 \le A-1 \, ,\, 16 \le 
|j_1-j_2| \le 32} I(f^{L,N},g_1^{L_1,N_1,A,j_1}, g_2^{L_2,N_2,A,j_2}) \, .
\end{eqnarray}
The first sum is estimated using \cite[Prop. 4.7]{BHHT} by
$$ L_1^{\frac{5}{12}}  L_2^{\frac{5}{12}} L^{\frac{5}{12}} N^{-\frac{1}{2}} 
(\frac{N}{N_1})^{\frac{1}{4}} \|f^{L,N}\|_{L^2} \|g_1^{L_1,N_2}\|_{L^2} 
\|g_2^{L_2,N_2}\|_{L^2} \, . $$
The second sum is treated using \cite[Prop. 4.4 and Prop. 4.6]{BHHT} and $A \le 
M \ll N_1$. We distinguish two cases.\\
{\bf a.} $L,L_1,L_2 \le N_1^2$.\\
We define $ \alpha := 2^{-4} \min( (\frac{N_1}{N})^{\frac{1}{2}} N_1 
\max(L_1,L_2,L)^{-\frac{1}{2}} , N_1)$. The part where $A \le \alpha$ can be 
estimated for fixed $A$ using \cite[Prop. 4.4]{BHHT} by
$$ N_1^{-\frac{1}{2}} (\frac{A}{N_1})^{\frac{1}{2}} (L_1 L_2 L)^{\frac{1}{2}} 
\|f^{L,N}\|_{L^2} 
 \|g_1^{L_1,N_1,A,j_1}\|_{L^2} 
\|g_2^{L_2,N_2,A,j_2}\|_{L^2} \, . $$
 
Summing over $64 \le A \le \alpha$ and $j_1,j_2$ and using $\sum_{64 \le A \le 
\alpha} A^{\frac{1}{2}} \lesssim \alpha^{\frac{1}{2}}$ we get the bound
$$
N_1^{-\frac{1}{2}} (\frac{N_1}{N})^{\frac{1}{4}} (L_1 L_2 L)^{\frac{5}{12}} 
\|f^{L,N}\|_{L^2} 
 \|g_1^{L_1,N_1}\|_{L^2} 
\|g_2^{L_2,N_2}\|_{L^2} \, . $$ 
Next we consider the part $A \ge \alpha$. It is estimated using \cite[Prop. 
4.6]{BHHT} by
$$ N^{-\frac{1}{2}} (\frac{N_1}{A})^{\frac{1}{2}} (L_1 L_2 
L)^{\frac{1}{2}}\max(L_1,L_2,L)^{-\frac{1}{2}} \|f^{L,N}\|_{L^2} 
\|g_1^{L_1,N_1,A,j_1}\|_{L^2} 
\|g_2^{L_2,N_2,A,j_2}\|_{L^2} \, . $$
Summing over $\alpha \le A \le N_1$ and $j_1,j_2$ and using $\sum_{A \ge 
\alpha} 
A^{-\frac{1}{2}} \lesssim \alpha^{-\frac{1}{2}}$ we get the bound
\begin{eqnarray*}
\lefteqn{(L_1 L_2 L)^{\frac{1}{2}} \max(L_1,L_2,L)^{-\frac{1}{2}} 
N^{-\frac{1}{2}} N_1^{\frac{1}{2}} (\frac{N}{N_1})^{\frac{1}{4}} 
N_1^{-\frac{1}{2}} \max(L_1, L_2, L)^{\frac{1}{4}} \cdot} \\
& & \cdot \|f^{L,N}\|_{L^2} 
 \|g_1^{L_1,N_1}\|_{L^2} 
\|g_2^{L_2,N_2}\|_{L^2} \\
& &\le   N^{-\frac{1}{2}} (\frac{N}{N_1})^{\frac{1}{4}} (L_1 L_2 
L)^{\frac{5}{12}} \|f^{L,N}\|_{L^2} 
 \|g_1^{L_1,N_1}\|_{L^2} 
\|g_2^{L_2,N_2}\|_{L^2} \, .
\end{eqnarray*}
{\bf b.} $\max(L_1,L_2,L) \gtrsim N_1^2$.\\
\cite[Prop. 4.6]{BHHT} gives the following bound for fixed $A$:
\begin{eqnarray*}
\lefteqn{(L_1 L_2 L)^{\frac{1}{2}} \max(L_1,L_2,L)^{-\frac{1}{2}} 
N^{-\frac{1}{2}} (\frac{N_1}{A})^{\frac{1}{2}} \|f^{L,N}\|_{L^2} 
 \|g_1^{L_1,N_1,A,j_1}\|_{L^2} 
\|g_2^{L_2,N_2,A,j_2}\|_{L^2} } \\
& \lesssim & N^{-\frac{1}{2}} N_1^{-\frac{1}{2}-} (\frac{N_1}{A})^{\frac{1}{2}} 
(L_1 L_2 L)^{\frac{5}{12}+} \|f^{L,N}\|_{L^2} 
 \|g_1^{L_1,N_1,A,j_1}\|_{L^2} 
\|g_2^{L_2,N_2,A,j_2}\|_{L^2} \, .
\end{eqnarray*}
Summation over $64 \le A \le N_1$ and $j_1,j_2$ using $\sum A^{-\frac{1}{2}} 
\lesssim 1$ gives the bound
$$ N^{-\frac{1}{2}} N_1^{0-} (L_1 L_2 L)^{\frac{5}{12}+} \|f^{L,N}\|_{L^2} 
 \|g_1^{L_1,N_1}\|_{L^2} 
\|g_2^{L_2,N_2}\|_{L^2} \, . $$   
{\bf Case 2:} $ N_1 \ll N_2 $ or $ N_2 \ll N_1$.\\
Using \cite[Prop. 4.8]{BHHT} we get the bound
$$N^{-\frac{1}{2}} (L_1 L_2 L)^{\frac{5}{12}} 
\min(\frac{N_1}{N_2},\frac{N_2}{N_1})^{\frac{1}{6}} \|f^{L,N}\|_{L^2} 
 \|g_1^{L_1,N_1}\|_{L^2} 
\|g_2^{L_2,N_2}\|_{L^2} \, . $$ 
{\bf Case 3:} $N \lesssim 1$ ($\Rightarrow N_1 \sim N_2$ or $N_1,N_2 \lesssim 
1$).\\
\cite[Prop. 4.9]{BHHT} gives the bound
$$ (L_1 L_2 L)^{\frac{1}{3}} \|f^{L,N}\|_{L^2} 
 \|g_1^{L_1,N_1}\|_{L^2} 
\|g_2^{L_2,N_2}\|_{L^2} \, . $$
In any of these cases dyadic summation over $L_1,L_2,L$ and $N_1,N_2,N$ gives 
the desired bound. 
\end{proof}

\begin{prop}
\label{Prop. 2.2}
Assume $ s > - \frac{1}{2} \, , \, \sigma \ge - \frac{1}{2} \, , \, s < 
\sigma +1 \, . $ Then the following estimate holds:
$$ \| un \|_{X^{s,-\frac{1}{2}+}} 
\lesssim  
\|u\|_{X^{s,\frac{1}{2}-}} \|n\|_{X^{\sigma,\frac{1}{2}-}_{\pm}} \, . $$
\end{prop}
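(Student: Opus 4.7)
By duality, the estimate of Proposition~\ref{Prop. 2.2} is equivalent to the trilinear bound
\[
\bigl|\langle un, v\rangle_{L^2_{tx}}\bigr| \lesssim \|u\|_{X^{s,\frac{1}{2}-}} \|n\|_{X^{\sigma, \frac{1}{2}-}_{\pm}} \|v\|_{X^{-s, \frac{1}{2}-}}.
\]
My plan is to deduce this by dyadic decomposition in space frequency and modulation, using as building blocks the pointwise dyadic estimates already established in the proof of Proposition~\ref{Prop. 2.1}. The decisive observation is that $\frac{1}{2}- > \frac{5}{12}+$, so in each modulation variable one gains a summable factor of the form $L^{-\epsilon}$ when passing from $X^{\cdot, \frac{5}{12}+}$ to $X^{\cdot, \frac{1}{2}-}$ pieces; the modulation summation is therefore harmless.

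Concretely, I would decompose $u = \sum_{N_1,L_1} u_{N_1}^{L_1}$, $n = \sum_{N,L} n_N^L$ and $v = \sum_{N_2,L_2} v_{N_2}^{L_2}$ (with $v$ the dual test function) and apply the estimates of Cases~1, 2 and 3 in the proof of Proposition~\ref{Prop. 2.1} to each dyadic piece. Rewriting the $L^2$-piece norms in terms of the target $X$-norms introduces the extra frequency weight $N_1^{-s} N_2^{s} N^{-\sigma}$ together with a modulation factor $(L_1 L_2 L)^{-\frac{1}{12}+}$, the latter summable immediately. The problem reduces to dyadic Schur summability of
\[
C(N_1, N_2, N)\, N_1^{-s} N_2^{s} N^{-\sigma},
\]
where $C(\cdot)$ is the frequency factor from Proposition~\ref{Prop. 2.1}. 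In Case~1 ($N_1 \sim N_2 \gtrsim N$) the $\pm s$ exponents cancel and one is left with $N^{-\sigma-\frac{1}{2}}(N/N_1)^{\frac{1}{4}}$, summable precisely at the endpoint $\sigma = -\frac{1}{2}$. Case~3 ($N \lesssim 1$) is immediate.

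The principal obstacle is Case~2 ($N_1 \ll N_2$ or $N_2 \ll N_1$, so that $N \sim \max(N_1, N_2)$) near the upper endpoint $s \to \sigma + 1$. The gain $\min(N_1/N_2, N_2/N_1)^{\frac{1}{6}}$ recorded in the proof of Proposition~\ref{Prop. 2.1} suffices only for the range $s < \sigma + \frac{2}{3}$ under a direct Schur test. To reach the full range $s < \sigma + 1$, one must either sharpen the Case~2 estimate to a $\min(\cdot)^{\frac{1}{2}}$-type bound by revisiting the underlying BHHT low-high building block directly, or handle this interaction separately via a Strichartz/Bernstein bound on the low-frequency factor combined with an $X^{\sigma, \frac{1}{2}-}_\pm$ energy-type bound on the high-frequency wave. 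Verifying this refined low-high bilinear estimate, together with the delicate endpoint bookkeeping at $\sigma = -\frac{1}{2}$ in Case~1 via the $(N/N_1)^{\frac{1}{4}}$ gain, is the main work; the condition $s > -\frac{1}{2}$ enters when one swaps the roles of $u$ and $v$ and sums the dual high-low interaction.
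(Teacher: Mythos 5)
Your reduction to the trilinear form, the dyadic setup, and the treatment of the balanced case $N_1\sim N_2$ via Proposition~\ref{Prop. 2.1} (where the $\pm s$ weights cancel and the $(N/N_1)^{1/4}$ gain handles the endpoint $\sigma=-\frac12$) all match the paper's argument, as does Case~3. But the decisive step --- the unbalanced interaction $N_1\ll N_2$ (and its mirror) on the full range $s<\sigma+1$ --- is exactly the part you leave open: you correctly observe that the $\min(N_1/N_2,N_2/N_1)^{1/6}$ gain recorded in Proposition~\ref{Prop. 2.1} only reaches $s<\sigma+\frac23$, and then you offer two unverified routes (``sharpen to a $\min(\cdot)^{1/2}$-type bound'' or ``Strichartz/Bernstein on the low frequency'') and declare that verifying one of them ``is the main work.'' A plan that defers the main work is not a proof, so this is a genuine gap.

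The missing idea is elementary and is what the paper actually uses: in the unbalanced regime one has the non-resonance identity
\[
L_{\max}\gtrsim \bigl|\tau_1+|\xi_1|^2+\tau_2+|\xi_2|^2+\tau_3\pm|\xi_3|\bigr|=\bigl||\xi_1|^2+|\xi_2|^2\pm|\xi_3|\bigr|\gtrsim N_2^2 ,
\]
and since the target modulation exponents here are $\frac12-$ (not $\frac{5}{12}+$), one may write $L_{\mathrm{med}}^{1/2}L_{\min}^{1/2}=(L_1L_2L)^{1/2}L_{\max}^{-1/2}\lesssim (L_1L_2L)^{\frac12-}N_2^{-1+}$ after running the three subcases $L=L_{\max}$, $L_1=L_{\max}$, $L_2=L_{\max}$ through the bilinear building blocks of \cite{BHHT}. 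The resulting factor $N_2^{-1+}$ is precisely what converts into $N_1^{s-}N_2^{-s+}N^{\sigma-}$ under the hypotheses $-\frac12<s\le 0,\ \sigma\ge-\frac12$ (resp.\ $s>0,\ \sigma>s-1$), which is where the condition $s<\sigma+1$ enters --- not through a refined frequency-only $\min^{1/2}$ gain in the bilinear estimate itself. Without invoking this modulation-to-frequency conversion (or fully carrying out one of your alternative routes), the range $\sigma+\frac23\le s<\sigma+1$ is not covered by your argument.
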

\begin{proof}  We have to show
$$|I(\widehat{n},\widehat{u}_1,\widehat{u}_2)| \lesssim  
\|u_1\|_{X^{s,\frac{1}{2}-}} 
\|u_2\|_{X^{-s,\frac{1}{2}-}}\|n\|_{X_{\pm}^{\sigma,\frac{1}{2}-}} \, .
 $$
Using dyadic decompositions as in the proof of Proposition \ref{Prop. 2.1} we 
consider different 
cases.\\
{\bf Case 1:} $N_1 \sim N_2 $.\\
This case can be treated by using Proposition \ref{Prop. 2.1} directly.\\
{\bf Case 2.} $ 1 \le N_1 \ll N_2$ $(\Rightarrow N \sim N_2)$.\\
We have
$$ L_{max}:=\max(L,L_1,L_2) \gtrsim |\tau_1 + |\xi_1|^2 + \tau_2 + |\xi_2|^2 + 
\tau_3 \pm |\xi_3|| = ||\xi_1|^2 + |\xi_2|^2 \pm |\xi_3|| \gtrsim N_2^2  . $$
Using the proof of \cite[Prop. 4.8]{BHHT} we consider three cases.\\
{\bf a.} $L=L_{max}$.\\
We get
\begin{eqnarray*}
\lefteqn{ |I(f^{L,N},g_1^{L_1,N_1},g_2^{L_2,N_2})| } \\
& \lesssim & L_1^{\frac{1}{2}} L_2^{\frac{1}{2}} 
(\frac{N_1}{N_2})^{\frac{1}{2}} 
\|f^{L,N}\|_{L^2} 
 \|g_1^{L_1,N_1}\|_{L^2} 
\|g_2^{L_2,N_2}\|_{L^2} \\
& \lesssim & (L_1 L_2 L)^{\frac{1}{2}-} N_2^{-1+} 
(\frac{N_1}{N_2})^{\frac{1}{2}} \|f^{L,N}\|_{L^2} 
 \|g_1^{L_1,N_1}\|_{L^2} 
\|g_2^{L_2,N_2}\|_{L^2} \, .
\end{eqnarray*}
{\bf b.} $L_1 = L_{max}$.\\
Similarly we get
\begin{eqnarray*}
\lefteqn{ |I(f^{L,N},g_1^{L_1,N_1},g_2^{L_2,N_2})| } \\
& \lesssim & L^{\frac{1}{2}} L_2^{\frac{1}{2}} (\frac{N_1}{N_2})^{\frac{1}{2}} 
\|f^{L,N}\|_{L^2} 
 \|g_1^{L_1,N_1}\|_{L^2} 
\|g_2^{L_2,N_2}\|_{L^2} \\
& \lesssim & (L_1 L_2 L)^{\frac{1}{2}-} N_2^{-1+} 
(\frac{N_1}{N_2})^{\frac{1}{2}} \|f^{L,N}\|_{L^2} \|g_1^{L_1,N_1}\|_{L^2} 
\|g_2^{L_2,N_2}\|_{L^2} \, .
\end{eqnarray*}
{\bf c.} $L_2 = L_{max}$.
\begin{eqnarray*}
|I(f^{L,N},g_1^{L_1,N_1},g_2^{L_2,N_2})| \lesssim  L^{\frac{1}{2}} 
L_1^{\frac{1}{2}} \|f^{L,N}\|_{L^2}  \|g_1^{L_1,N_1}\|_{L^2} 
\|g_2^{L_2,N_2}\|_{L^2} \\ \lesssim  (L_1 L_2 L)^{\frac{1}{2}-} N_2^{-1+}  
\|f^{L,N}\|_{L^2} 
 \|g_1^{L_1,N_1}\|_{L^2} 
\|g_2^{L_2,N_2}\|_{L^2} \, .
\end{eqnarray*}
If $ -\frac{1}{2} < s \le 0 $ and $ \sigma \ge -\frac{1}{2}$ we get
$$N_2^{-1+} \lesssim N_2^{s-} N_2^{\sigma -} \sim N_2^{s-} N^{\sigma -} 
\lesssim 
N_2^{0-} N_1^{s-} N^{\sigma -} \lesssim \frac{N_1^{s-}}{N_2^{s+}} N^{\sigma -} 
\, , $$
and in the case $s >0$ and $\sigma > s-1$ we get the same bound, because
$$ N_2^{-1+} \lesssim N_2^{-s+\sigma-} \lesssim \frac{N_2^{\sigma -}}{N_2^{s+}} 
\sim \frac{N^{\sigma -}}{N_2^{s+}} \lesssim \frac{N^{\sigma -}}{N_2^{s+}} 
N_1^{s-} \, . $$
In any case we thus get 
$$ |I(f^{L,N},g_1^{L_1,N_1},g_2^{L_2,N_2})| \lesssim \frac{N_1^{s-}}{N_2^{s+}} 
N^{\sigma -} (L_1 L_2 L)^{\frac{1}{2}-} \|f^{L,N}\|_{L^2}  
\|g_1^{L_1,N_1}\|_{L^2} 
\|g_2^{L_2,N_2}\|_{L^2} \, . $$
{\bf Case 3.} $1 \le N_2 \ll N_1$ ($\Rightarrow N \sim N_1$).\\
Similarly as in case 2 we get the bound
$$ |I(f^{L,N},g_1^{L_1,N_1},g_2^{L_2,N_2})| \lesssim  (L_1 L_2 
L)^{\frac{1}{2}-} 
N_1^{-1+}  \|f^{L,N}\|_{L^2} 
 \|g_1^{L_1,N_1}\|_{L^2} 
\|g_2^{L_2,N_2}\|_{L^2} 
\, . $$ 
If $-\frac{1}{2} < s \le 0$ and $\sigma \ge -\frac{1}{2}$ we get
$$ N_1^{-1+} \lesssim N_1^{s-} N^{\sigma -} \lesssim \frac{N_1^{s-}}{N_2^{s+}} 
N^{\sigma -} \, , $$
and if $s > 0$ and $\sigma > s-1$ we get
$$ N_1^{-1+} \lesssim \frac{N_1^{\sigma -}}{ N_1^{s+}} \sim \frac{N^{\sigma 
-}}{ 
N_1^{s+}}\lesssim \frac{N^{\sigma-}}{N_2^{s+}} \lesssim \frac{N^{\sigma 
-}}{N_2^{s+}} N_1^{s -} \, , $$
so that we get the same bound as in case 2.

Dyadic summation in all cases completes the proof of 
Prop. \ref{Prop. 2.2}.
\end{proof}
We also need the following bilinear estimate for our unconditional uniqueness 
result:
\begin{prop}
\label{Prop. 2.2'}
For any $\epsilon > 0$ the following estimate holds:
$$ \| un \|_{X^{-\epsilon,-\frac{1}{2}+}} 
\lesssim  
\|u\|_{X^{-\frac{1}{2}-,\frac{1}{2}+}} \|n\|_{X^{0-,\frac{1}{2}+}_{\pm}} \, . 
$$
\end{prop}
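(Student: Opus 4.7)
The plan is to follow the template of Proposition \ref{Prop. 2.2} closely, paying attention to the strict $+/-$ signs on the $b$-indices and to the need for Cauchy--Schwarz (rather than $\ell^1$) summation in the dyadic variables. By duality it suffices to establish
$$|I(\widehat{n},\widehat{u}_1,\widehat{u}_2)| \lesssim \|u_1\|_{X^{-1/2-,1/2+}} \|n\|_{X^{0-,1/2+}_{\pm}} \|u_2\|_{X^{\epsilon,1/2-}}$$
for arbitrary $u_2$, with $I$ the trilinear form in (\ref{I}). I would then use the same dyadic decompositions of $u_1,u_2,n$ in Littlewood--Paley frequency $N_1,N_2,N$ and modulation $L_1,L_2,L$ as in the proofs of Propositions \ref{Prop. 2.1} and \ref{Prop. 2.2}, and split into the cases $N_1\sim N_2$, $N_1\ll N_2\sim N$, $N_2\ll N_1\sim N$, and the low-frequency case $N\lesssim 1$.

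Case 1 ($N_1\sim N_2$) follows directly from Proposition \ref{Prop. 2.1}: the stronger $b=\frac{1}{2}+$ input indices absorb the $\frac{5}{12}+$ of that estimate, and the output $X^{-\epsilon,-1/2+}$ is weaker than $X^{0,-5/12-}$ for $\epsilon>0$. Case 2 ($N_1\ll N_2\sim N$) follows the same scheme as Case 2 of Proposition \ref{Prop. 2.2}: the resonance identity $L_{\max}\gtrsim N_2^2$ combined with the BHHT Proposition 4.8-type bounds gives a gain of $N_2^{-1+}$ on the highest frequency, which (together with the low-high factor $(N_1/N_2)^{1/2}$, when available) easily dominates the target weights after two Cauchy--Schwarz applications in $N_1$ and $N_2$. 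The low-frequency case $N\lesssim 1$ is handled by BHHT Proposition 4.9.

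The crucial case is Case 3 ($N_2\ll N_1\sim N$, $L_{\max}\gtrsim N_1^2$). In each of the three subcases $L, L_1, L_2 = L_{\max}$, one derives a pointwise bound of the form $|I|\lesssim (LL_1L_2)^{1/2}N_1^{-1}\|f\|_2\|g_1\|_2\|g_2\|_2$ exactly as in Case 3 of Proposition \ref{Prop. 2.2}. In the worst subcase $L_2=L_{\max}$, dividing by the target weights leaves a kernel of the schematic form $L^{0-}L_1^{0-}L_2^{-1/2+}N_1^{1/2+}N_2^{-\epsilon}$. Summing $L_2^{-1/2+}$ over $L_2\ge N_1^2$ via Cauchy--Schwarz contributes a factor $N_1^{-1+}$; a further Cauchy--Schwarz in $N_2\le N_1$ contributes at most $N_1^{1/2-\epsilon}$, leaving a residual $N_1^{-\epsilon+}$ which is square-summable in $N_1$ for any $\epsilon>0$, provided the various small positive exponents are chosen sufficiently small compared to $\epsilon$. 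The other two subcases $L=L_{\max}$ and $L_1=L_{\max}$ carry an extra low-high factor $(N_2/N_1)^{1/2}$ from BHHT Proposition 4.8 and are at least as good.

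The main obstacle is precisely this Case 3 summation. The loss of half a spatial derivative in $u$ (the hypothesis here is $u\in X^{-1/2-,1/2+}$, whereas Proposition \ref{Prop. 2.2} requires $s>-\tfrac{1}{2}$) converts into a factor $N_1^{1/2+}$, which is only just absorbed by the combined $N_1$-decay from the resonance identity $L_{\max}\gtrsim N_1^2$ and two Cauchy--Schwarz steps. The strict positivity of $\epsilon$ on the output, together with the strengthened $b$-indices $\tfrac{1}{2}+$ on both $u$ and $n$, is exactly what makes the final dyadic sum over $N_1$ converge.
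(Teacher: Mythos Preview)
Your Case 1 claim is the main gap. Proposition \ref{Prop. 2.1} does \emph{not} suffice for the high--high--low interaction $N_1\sim N_2\gg N$. The target weights here are asymmetric in $u_1,u_2$: after duality you need the dyadic block bounded by $N_1^{-1/2-}N_2^{\epsilon}N^{0-}\sim N_1^{-1/2-+\epsilon}N^{0-}$ (times modulation weights). But the proof of Proposition \ref{Prop. 2.1} in this regime yields only the symmetric gain $N^{-1/2}(N/N_1)^{1/4}=N^{-1/4}N_1^{-1/4}$. When $N\ll N_1$ (e.g.\ $N\sim 1$) the ratio is $\sim N_1^{1/4+-\epsilon}$, which diverges for small $\epsilon$. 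The point of Proposition \ref{Prop. 2.2}'s Case 1 shortcut was that the weights there satisfied $N_1^{s}N_2^{-s}\sim 1$; that cancellation is absent here. The paper accordingly reopens the angular decomposition (\ref{HHL}) and, using the stronger $b$-indices $\tfrac12\pm$, trades modulation for the full $N_1^{-1/2-}N_2^{0+}$ gain (via \cite[Prop.~4.4, 4.6, 4.7]{BHHT} with $A$ summed up to $M\sim N_1$ rather than to the optimized cutoff $\alpha$). This is where the real work is, not in your Case 3.

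Your low-frequency case has the same problem: \cite[Prop.~4.9]{BHHT} gives only $(L_1L_2L)^{1/3}$ with no frequency decay, which fails when $N\lesssim 1$ but $N_1\sim N_2\gg 1$ for the same reason. The paper uses the bilinear Strichartz estimate \cite[Prop.~4.3]{BHHT} instead to extract $N_2^{-1/2}$. Conversely, your ``crucial'' Case 3 ($N_2\ll N_1\sim N$) is routine: the resonance gain $N_1^{-1+}$ already beats the required $N_1^{-1/2-+\epsilon}$ with a half-derivative to spare, and the paper dispatches it in two lines. (Your $N_2$-summation there is also overcounted: $\sum_{N_2\ge 1} N_2^{-\epsilon}$ is $O(1)$, not $N_1^{1/2-\epsilon}$.)
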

\begin{proof}
We use dyadic decompositions as in the proof of Proposition \ref{Prop. 2.1}.\\
{\bf Case 1:} $N_1 \sim N_2 \gtrsim N \ge 2^{10} $.\\
We use (\ref{HHL}). When estimating its first sum we consider different cases 
using the proof of \cite[Prop. 4.7]{BHHT}.\\
{\bf a.} $L=L_{max}$. \\
In this case we get the bound 
$$(L_1 L_2)^{\frac{1}{2}} N^{-\frac{1}{2}} \|f^{L,N}\|_{L^2}  
\|g_1^{L_1,N_1}\|_{L^2} 
\|g_2^{L_2,N_2}\|_{L^2}$$
and
\begin{itemize}
\item either $N \sim N_1$ in which case we have
$$(L_1 L_2)^{\frac{1}{2}} N^{-\frac{1}{2}} \le (L_1 L_2 L)^{\frac{1}{3}} 
N_1^{-\frac{1}{2}} \sim (L_1L_2L)^{\frac{1}{3}} 
N_1^{-\frac{1}{2}-}N_2^{0+}N^{0-} \, , $$
\item or $NN_1 \lesssim L_{max}$ in which case we get 
$$ (L_1 L_2)^{\frac{1}{2}} N^{-\frac{1}{2}} \lesssim 
(L_1L_2)^{\frac{1}{2}-}L^{\frac{1}{2}+} 
N^{-\frac{1}{2}}N_1^{-\frac{1}{2}-}N_2^{0+} \, . $$
\end{itemize}
{\bf b.} $L_1 = L_{max}$. \\
In this case we get the bound
\begin{eqnarray*}
\lefteqn{(L L_2)^{\frac{1}{2}} N_1^{-\frac{1}{2}} \|f^{L,N}\|_{L^2}  
\|g_1^{L_1,N_1}\|_{L^2} 
\|g_2^{L_2,N_2}\|_{L^2} } \\
& \lesssim & (LL_1L_2)^{\frac{1}{3}} N_1^{-\frac{1}{2}-} N_2^{0+} 
\|f^{L,N}\|_{L^2}  
\|g_1^{L_1,N_1}\|_{L^2} 
\|g_2^{L_2,N_2}\|_{L^2}  \, . 
\end{eqnarray*}
{\bf c.} $L_2 = L_{max}$. \\
This case is similar as case b.\\
The second sum in (\ref{HHL}) is estimated as follows.\\
{\bf a.} $L_{max} \lesssim N_1^2$. \\
By \cite[Prop. 4.4]{BHHT} for fixed $A$ we get the bound
\begin{eqnarray*}
\lefteqn{ N_1^{-\frac{1}{2}} (\frac{A}{N_1})^{\frac{1}{2}} (L_1 L_2 
L)^{\frac{1}{2}} 
\|f^{L,N}\|_{L^2} 
 \|g_1^{L_1,N_1,A,j_1}\|_{L^2} 
\|g_2^{L_2,N_2,A,j_2}\|_{L^2} } \\
& \lesssim & N_1^{-\frac{1}{2}+} (\frac{A}{N_1})^{\frac{1}{2}} (L_1 L_2 
L)^{\frac{1}{2}-} 
\|f^{L,N}\|_{L^2} 
 \|g_1^{L_1,N_1,A,j_1}\|_{L^2} 
\|g_2^{L_2,N_2,A,j_2}\|_{L^2}
\end{eqnarray*} 
Summing over $64 \le A \le 2^{-4}N_1$ and $j_1,j_2$ and using $\sum_{64 \le A 
\le 
N_1} A^{\frac{1}{2}} \lesssim N_1^{\frac{1}{2}}$ we get the bound
$$
N_1^{-\frac{1}{2}-} N_2^{0+} N^{0-} (L_1 L_2 L)^{\frac{1}{2}-} 
\|f^{L,N}\|_{L^2} 
 \|g_1^{L_1,N_1}\|_{L^2} 
\|g_2^{L_2,N_2}\|_{L^2} \, . $$ 
{\bf b.} $L_{max} \gtrsim N_1^2$. \\
By \cite[Prop. 4.6]{BHHT} for fixed $A$ we get the bound
\begin{eqnarray*}
\lefteqn{ N^{-\frac{1}{2}} (\frac{N_1}{A})^{\frac{1}{2}} \frac{(L_1 L_2 
L)^{\frac{1}{2}}}{L_{max}^{\frac{1}{2}}} 
\|f^{L,N}\|_{L^2} 
 \|g_1^{L_1,N_1,A,j_1}\|_{L^2} 
\|g_2^{L_2,N_2,A,j_2}\|_{L^2} } \\
& \lesssim & \frac{(L_1L_2 
L)^{\frac{1}{2}-}}{N_1^{\frac{1}{2}-}N^{\frac{1}{2}}} 
A^{-\frac{1}{2}} 
\|f^{L,N}\|_{L^2} 
 \|g_1^{L_1,N_1,A,j_1}\|_{L^2} 
\|g_2^{L_2,N_2,A,j_2}\|_{L^2}
\end{eqnarray*} 
Summing over $64 \le A \le 2^{-4}N_1$ and $j_1,j_2$ and using $\sum 
A^{-\frac{1}{2}} \lesssim 1$ we get the bound
$$
\frac{(L_1 L_2 L)^{\frac{1}{2}-}N_2^{0+}}{N_1^{\frac{1}{2}+}  N^{\frac{1}{2}}}  
\|f^{L,N}\|_{L^2} 
 \|g_1^{L_1,N_1}\|_{L^2} 
\|g_2^{L_2,N_2}\|_{L^2} \, . $$ 
{\bf Case 2.} $ 1 \le N_1 \ll N_2$ \\
Similarly as in the proof of Proposition \ref{Prop. 2.2} we have
\begin{eqnarray*}
\lefteqn{ |I(f^{L,N},g_1^{L_1,N_1},g_2^{L_2,N_2})| } \\
& \lesssim &   (L_1 L_2 L)^{\frac{1}{2}-} N_2^{-1+}
 \|f^{L,N}\|_{L^2} 
 \|g_1^{L_1,N_1}\|_{L^2} 
\|g_2^{L_2,N_2}\|_{L^2} 
\end{eqnarray*}
{\bf Case 3.} $ 1 \le N_1 \ll N_2$ \\
We have similarly as in case 2:
\begin{eqnarray*}
\lefteqn{ |I(f^{L,N},g_1^{L_1,N_1},g_2^{L_2,N_2})| } \\
& \lesssim &   (L_1 L_2 L)^{\frac{1}{2}-} N_1^{-1+}
 \|f^{L,N}\|_{L^2} 
 \|g_1^{L_1,N_1}\|_{L^2} 
\|g_2^{L_2,N_2}\|_{L^2}
\end{eqnarray*}
{\bf Case 4.} $ 1 \le N \lesssim 1 $ ($\Rightarrow N_1 \sim N_2$ or $ 1\le 
N_1,N_2,N \lesssim 1$)\\
By the bilinear Strichartz type estimate \cite[Prop. 4.3]{BHHT}
 we get
\begin{eqnarray*}
\lefteqn{ |I(f^{L,N},g_1^{L_1,N_1},g_2^{L_2,N_2})| } \\
& \lesssim & (\frac{\min(N,N_2)}{N_2})^{\frac{1}{2}}  (L_2 L)^{\frac{1}{2}} 
 \|f^{L,N}\|_{L^2} 
 \|g_1^{L_1,N_1}\|_{L^2} 
\|g_2^{L_2,N_2}\|_{L^2} \\
& \lesssim & N_2^{-\frac{1}{2}}  (L_2 L)^{\frac{1}{2}} 
 \|f^{L,N}\|_{L^2} 
 \|g_1^{L_1,N_1}\|_{L^2} 
\|g_2^{L_2,N_2}\|_{L^2}\\
& \lesssim & N_2^{-\frac{1}{2}-} N_1^{0+}  (L_2 L)^{\frac{1}{2}} 
 \|f^{L,N}\|_{L^2} 
 \|g_1^{L_1,N_1}\|_{L^2} 
\|g_2^{L_2,N_2}\|_{L^2} \, .
\end{eqnarray*}
Dyadic summation in all cases completes the proof of 
Prop. \ref{Prop. 2.2'}.
\end{proof}  
\begin{prop}
\label{Prop. 2.3}
Assume $ s > - \frac{1}{4} \, , \, \sigma  - 2s <\frac{3}{2} \, , \, \sigma < 
s+2 \, . $ Then the following estimate holds:
$$ \| u_1 \overline{u}_2 \|_{X_{\pm}^{\sigma -1,-\frac{1}{2}+}} 
\lesssim  
\|u_1\|_{X^{s,\frac{1}{2}-}} \|u_2\|_{X^{s,\frac{1}{2}-}} \, . $$
\end{prop}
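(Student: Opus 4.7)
The plan is to proceed by duality and dyadic decomposition, following exactly the scheme used for Propositions~\ref{Prop. 2.1}--\ref{Prop. 2.2'}. By the standard $X^{m,b}$ duality the claim is equivalent to the trilinear bound
$$|I(\hat n,\hat u_1,\widehat{\bar u_2})|\ \lesssim\ \|u_1\|_{X^{s,\frac12-}}\,\|u_2\|_{X^{s,\frac12-}}\,\|n\|_{X^{1-\sigma,\frac12-}_{\mp}}$$
on the region $\sum\xi_i=\sum\tau_i=0$. Using $\widehat{\bar u_2}(\xi,\tau)=\overline{\hat u_2(-\xi,-\tau)}$, the factor $\bar u_2$ carries the effective modulation weight $\langle\tau-|\xi|^2\rangle$, so on the sum-zero hyperplane the maximum of the three modulation weights satisfies
$$L_{\max}\ \gtrsim\ \bigl||\xi_1|^2-|\xi_2|^2\pm|\xi_3|\bigr|.$$
This is the bilinear Schr\"odinger resonance identity that is sharpened by the BHHT angular apparatus.

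Next I decompose $u_k=\sum_{N_k,L_k}S_{L_k}P_{N_k}u_k$ and $n=\sum_{N,L}W^{\mp}_L P_N n$ and split into three regimes. Case~1 ($N_1\sim N_2\gtrsim N\ge 2^{10}$) is the high--high to low resonance: I reuse the two-part splitting (\ref{HHL}), estimating the near-diagonal angular piece by the 2D Schr\"odinger--Schr\"odinger bilinear refinement \cite[Prop.\ 4.7]{BHHT}, and controlling the off-diagonal piece after an additional angular partition of size $A$ by separating the subcases $L_{\max}\lesssim N_1^2$ and $L_{\max}\gtrsim N_1^2$ and applying \cite[Props.\ 4.4, 4.6]{BHHT}. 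Case~2 ($N_1\ll N_2$ or $N_2\ll N_1$, so $N\sim N_{\max}$) is non-resonant since the modulation identity forces $L_{\max}\gtrsim N_{\max}^2$; splitting according to which of $L,L_1,L_2$ realizes $L_{\max}$ and invoking \cite[Prop.\ 4.8]{BHHT} produces, as in the proof of Proposition~\ref{Prop. 2.2}, a factor $N_{\max}^{-1+}$, which is dyadically summable precisely when $\sigma<s+2$. Case~3 ($N\lesssim 1$) is handled by the basic bilinear Strichartz estimate \cite[Prop.\ 4.3 or 4.9]{BHHT}.

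The main obstacle is Case~1. There $L_{\max}$ can be as small as $O(1)$, so one cannot rely on modulation alone and must combine the bilinear $L^2$-Strichartz gain $N_{\min}^{1/2}N_{\max}^{-1/2}$ (with two Schr\"odinger factors rather than one Schr\"odinger and one wave factor, as in Proposition~\ref{Prop. 2.1}) with the angular refinement $(N/N_1)^{1/4}$ and a careful distribution of modulation weights across $L,L_1,L_2$. The subcase threshold $L_{\max}\sim N_1^2$ must be split into two estimates and summed over the angular partition parameter $A$. After this, the schematic bound carries the factor $N_1^{-2s-}N^{1-\sigma+}(N/N_1)^{1/4}$, which is dyadically summable in $N\le N_1\sim N_2$ exactly when $s>-\tfrac14$ and $\sigma-2s<\tfrac32$. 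Once Case~1 is settled, the dyadic summation in the remaining cases is a routine adaptation of the computations in the proofs of Propositions~\ref{Prop. 2.1} and \ref{Prop. 2.2}.
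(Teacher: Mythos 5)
Your outline reproduces the paper's proof almost step for step: duality, the dyadic and angular decomposition (\ref{HHL}), the near-diagonal sum via \cite[Prop.\ 4.7]{BHHT}, the off-diagonal sum via \cite[Props.\ 4.4, 4.6]{BHHT} split according to $L_{\max}\lessgtr N_1^2$, the non-resonance $L_{\max}\gtrsim N_{\max}^2$ in the unbalanced case, and a bilinear Strichartz bound for $N\lesssim 1$; the hypotheses are invoked in essentially the same places. Two of your summability claims are too quick, though.

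First, in Case 2 the condition $\sigma<s+2$ suffices only for $s>0$. For $-\frac14<s\le 0$ the factor $N_1^{s-}$ is smallest when $N_1\sim N_2$, so one must check $N_2^{-1+}\lesssim N_2^{2s+1-\sigma-}$, i.e.\ $\sigma<2s+2$; this is supplied by the hypothesis $\sigma-2s<\frac32$, not by $\sigma<s+2$, so Case 2 is not summable ``precisely when $\sigma<s+2$''. Second, Case 3 does not close with ``the basic bilinear Strichartz estimate'' alone. \cite[Prop.\ 4.9]{BHHT} gives $(LL_1L_2)^{1/3}$ with no decay in $N_1$, which cannot produce the required $N_1^{2s-}$ when $s<0$; and \cite[Prop.\ 4.3]{BHHT} gives $N_1^{-1/2}L^{1/2}L_{\min}^{1/2}$, which puts the full weight $L^{1/2}$ on the dual (wave) factor while the norm only supplies $L^{1/2-}$, so the dyadic sum over the modulations diverges. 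The paper interpolates the Prop.\ 4.3 bound (rewritten as $N_1^{-1/2}L^{1/2}L_1^{1/4}L_2^{1/4}$ for $L_1\le L_2$) with the $(LL_1L_2)^{1/3}$ bound of \cite[formula (4.22)]{BHHT} to get $N_1^{-1/2+}L^{1/2-}L_1^{1/4+}L_2^{1/4+}$, which sums in all three $L$'s and still carries enough $N_1$-decay because $s>-\frac14$. You should add this interpolation step; with it, your plan is the paper's argument.
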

\begin{proof}
With $I$ defined by (\ref{I}) we have to show
$$|I(\widehat{n},\widehat{u}_1,\widehat{u}_2)| \lesssim 
\|u_1\|_{X^{s,\frac{1}{2}-}} 
\|u_2\|_{X^{s,\frac{1}{2}-}}\|n\|_{X_{\pm}^{1-\sigma,\frac{1}{2}-}} \, .
 $$
Dyadically decomposing as in Proposition \ref{Prop. 2.2} we consider different 
cases.\\
{\bf Case 1.} $ N_1 \sim N_2 \gtrsim N \ge 2^{10}$. Fix $M=2^{-4} N_1$ and use 
(\ref{HHL}).\\
{\bf Case 1.1.} $ L_{max} \lesssim N_1^2$.\\
Using \cite[Prop. 4.4]{BHHT} for the second sum in (\ref{HHL}) we get the bound
\begin{eqnarray*}
\lefteqn{N_1^{-\frac{1}{2}} (L_1 L_2 L)^{\frac{1}{2}} \|f^{L,N}\|_{L^2} 
\sum_{64 
\le A \le M} (\frac{A}{N_1})^{\frac{1}{2}} } \\
& & \sum_{0 \le j_1,j_2 \le A-1\, , \, 16 \le |j_1 - j_2| \le 32} 
\|g_1^{L_1,N_1,A,j_1}\|_{L^2} \|g_2^{L_2,N_2,A,j_2}\|_{L^2} \\
& \lesssim & N_1^{-\frac{1}{2}+} (L_1 L_2 L)^{\frac{1}{2}-} \|f^{L,N}\|_{L^2} 
\|g_1^{L_1,N_1}\|_{L^2} \|g_2^{L_2,N_2}\|_{L^2} \, . 
\end{eqnarray*}
For the first sum in (\ref{HHL}) we use the proof of \cite[Prop. 4.7]{BHHT}, 
which gives \\
{\bf a.} in the case $L = L_{max}$
\begin{itemize}
\item either $N \sim N_1$ and thus the bound
\begin{eqnarray*}
\lefteqn{ I(f^{L,N},g_1^{L_1,N_1,M,j_1},g_2^{L_2,N_2,M,j_2}) } \\
& \lesssim & \frac{N_1^{\frac{1}{2}}}{M^{\frac{1}{2}}} \frac{(L_1 
L_2)^{\frac{1}{2}}}{N^{\frac{1}{2}}} \|f^{L,N}\|_{L^2} 
\|g_1^{L_1,N_1,M,j_1}\|_{L^2} \|g_2^{L_2,N_2,M,j_2}\|_{L^2} \\
& \lesssim & \frac{(L_1 L_2 L)^{\frac{1}{2}-}}{N_1^{\frac{1}{2}}} 
\|f^{L,N}\|_{L^2} \|g_1^{L_1,N_1,M,j_1}\|_{L^2} \|g_2^{L_2,N_2,M,j_2}\|_{L^2} 
\, 
,
\end{eqnarray*}
\item or $NN_1 \lesssim L_{max}$ and thus
\begin{eqnarray*}
\lefteqn{ I(f^{L,N},g_1^{L_1,N_1,M,j_1},g_2^{L_2,N_2,M,j_2}) } \\
& \lesssim & \frac{N_1^{\frac{1}{2}}}{M^{\frac{1}{2}}} \frac{(L_1 
L_2)^{\frac{1}{2}}}{N^{\frac{1}{2}}} \|f^{L,N}\|_{L^2} 
\|g_1^{L_1,N_1,M,j_1}\|_{L^2} \|g_2^{L_2,N_2,M,j_2}\|_{L^2} \\
& \lesssim & \frac{(L_1 L_2 L)^{\frac{1}{2}-}}{N^{\frac{1}{2}} N^{\frac{1}{2}-} 
N_1^{\frac{1}{2}-}} \|f^{L,N}\|_{L^2} \|g_1^{L_1,N_1,M,j_1}\|_{L^2} 
\|g_2^{L_2,N_2,M,j_2}\|_{L^2} \, .
\end{eqnarray*}
\end{itemize}
{\bf b.} In the case $L_1 = L_{max}$ we get the bound
\begin{eqnarray*}
\lefteqn{ I(f^{L,N},g_1^{L_1,N_1,M,j_1},g_2^{L_2,N_2,M,j_2}) } \\
& \lesssim & \frac{(L L_2)^{\frac{1}{2}}}{M^{\frac{1}{2}}} \|f^{L,N}\|_{L^2} 
\|g_1^{L_1,N_1,M,j_1}\|_{L^2} \|g_2^{L_2,N_2,M,j_2}\|_{L^2} \\
& \lesssim & \frac{(L_1 L_2 L)^{\frac{1}{2}-}}{N_1^{\frac{1}{2}}} 
\|f^{L,N}\|_{L^2} \|g_1^{L_1,N_1,M,j_1}\|_{L^2} \|g_2^{L_2,N_2,M,j_2}\|_{L^2} 
\, 
.
\end{eqnarray*}
{\bf c.} The case $L_2 = L_{max}$ is similar.\\
Thus the first sum in (\ref{HHL}) can be bounded like the second sum.\\
{\bf Case 1.2.} $L_{max} \gtrsim N_1^2$. \\
The first sum in (\ref{HHL}) is treated exactly as before, whereas the second 
sum 
is estimated using \cite[Prop. 4.6]{BHHT} by 
\begin{eqnarray*}
\lefteqn{ \sum_{64 \le A \le M} \frac{(L_1 L_2 L)^{\frac{1}{2}} 
N^{-\frac{1}{2}}}{L_{max}^{\frac{1}{2}}} (\frac{N_1}{A})^{\frac{1}{2}} } \\ & 
&\sum_{0 \le j_1,j_2 \le A-1\, , \, 16 \le |j_1-j_2| \le 32} 
I(f^{L,N},g_1^{L_1,N_1,A,j_1},g_2^{L_2,N_2,A,j_2}) \\
& \lesssim & \frac{(L_1 L_2 L)^{\frac{1}{2}-}}{N^{\frac{1}{2}} 
N_1^{\frac{1}{2}-}} \| f^{L,N}\|_{L^2}  \| g_1^{L_1,N_1}\|_{L^2} \| 
g_2^{L_2,N_2}\|_{L^2} \, ,
\end{eqnarray*}
where we used the estimate
$$ \frac{(L_1 L_2 L)^{\frac{1}{2}} N^{-\frac{1}{2}}}{L_{max}^{\frac{1}{2}}} 
(\frac{N_1}{A})^{\frac{1}{2}} \lesssim \frac{(L_1 L_2 L)^{\frac{1}{2}-} 
}{N^{\frac{1}{2}} N_1^{1-}} \frac{N_1^{\frac{1}{2}}}{A^{\frac{1}{2}}} $$
and $ \sum_{A} A^{-\frac{1}{2}} \lesssim 1 $ .

Summarizing, we get
\begin{eqnarray*}
\lefteqn{ |I(f^{L,N},g_1^{L_1,N_1},g_2^{L_2,N_2})| } \\
& \lesssim & (L_1 L_2 L)^{\frac{1}{2}-} N_1^{-\frac{1}{2}+} \| f^{L,N}\|_{L^2}  
\| g_1^{L_1,N_1}\|_{L^2} \| g_2^{L_2,N_2}\|_{L^2} \\
& \lesssim & (L_1 L_2 L)^{\frac{1}{2}-} N_1^{s-} N_2^{s-} N^{1-\sigma -} \| 
f^{L,N}\|_{L^2}  \| g_1^{L_1,N_1}\|_{L^2} \| g_2^{L_2,N_2}\|_{L^2} \, ,
\end{eqnarray*}
where we used $ s > - \frac{1}{4}$ and $\sigma < 2s + \frac{3}{2}$ to get
$$ N_1^{-\frac{1}{2}+} \lesssim N_1^{s-} N_2^{s-} N_1^{-\frac{1}{2}-2s+} 
\lesssim N_1^{s-} N_2^{s-} N^{-\frac{1}{2}-2s+} \lesssim N_1^{s-} N_2^{s-} 
N^{1-\sigma -} \, . $$
Dyadic summation over $N_1,N_2,N$ and $L_1,L_2,L$ gives the claimed estimate.\\
{\bf Case 2.} $N_1 \ll N_2 \sim N $ (or similarly $N_2 \ll N_1 \sim N$). \\
As in the proof of Prop. \ref{Prop. 2.2} we get the bound
\begin{eqnarray*}
\lefteqn{ |I(f^{L,N},g_1^{L_1,N_1},g_2^{L_2,N_2})| } \\
& \lesssim & (L_1 L_2 L)^{\frac{1}{2}-} N_2^{-1+} \| f^{L,N}\|_{L^2}  \| 
g_1^{L_1,N_1}\|_{L^2} \| g_2^{L_2,N_2}\|_{L^2} \\
& \lesssim & (L_1 L_2 L)^{\frac{1}{2}-} N_1^{s-} N_2^{s-} N^{1-\sigma -} \| 
f^{L,N}\|_{L^2}  \| g_1^{L_1,N_1}\|_{L^2} \| g_2^{L_2,N_2}\|_{L^2} \, ,
\end{eqnarray*}
where we used $\sigma < s+2$ to get in the case $s>0$
$$N_2^{-1+} \lesssim N_2^{s+1-\sigma-} \lesssim N_1^{s-} N_2^{s-} N^{1-\sigma 
-} 
$$
and $\sigma < 2s + \frac{3}{2}$ to get in the case $ s \le 0$
$$ N_2^{-1+} \lesssim N_2^{-2s-1+} N_2^{s-} N_2^{s-} \lesssim 
N_2^{\frac{1}{2}-\sigma-} N_1^{s-} N_2^{s-} \lesssim N^{\frac{1}{2}-\sigma -} 
N_1^{s-} N_2^{s-} \, ,
$$
which is more than enough to get the claimed result after dyadic summation.\\
{\bf Case 3.} $N \lesssim 1$ ($\Rightarrow N_1 \sim N_2$ or $N,N_1,N_2 \lesssim 
1$).\\
Assuming without loss of generality $L_1 \le L_2$ and using the bilinear 
Strichartz type estimate \cite[Prop. 4.3]{BHHT} we get
\begin{eqnarray*}
\lefteqn{ |I(f^{L,N},g_1^{L_1,N_1},g_2^{L_2,N_2})| } \\
& \lesssim & \| f^{L,N} g_1^{L_1,N_1}\|_{L^2} \|g_2^{L_2,N_2}\|_{L^2} \\
& \lesssim & \min(N,N_1)^{\frac{1}{2}} N_1^{-\frac{1}{2}} L^{\frac{1}{2}} 
L_1^{\frac{1}{2}} \| f^{L,N}\|_{L^2} \| g_1^{L_1,N_1}\|_{L^2} 
\|g_2^{L_2,N_2}\|_{L^2} \\
& \lesssim & N_1^{-\frac{1}{2}} L^{\frac{1}{2}} \| f^{L,N}\|_{L^2} 
L_1^{\frac{1}{4}} \| g_1^{L_1,N_1}\|_{L^2} L_2^{\frac{1}{4}} \| 
g_2^{L_2,N_2}\|_{L^2} \, . 
\end{eqnarray*}
Furthermore we get by \cite[formula (4.22)]{BHHT}
$$ |I(f^{L,N},g_1^{L_1,N_1},g_2^{L_2,N_2})| \le L^{\frac{1}{3}} \| 
f^{L,N}\|_{L^2}  L_1^{\frac{1}{3}} \| g_1^{L_1,N_1}\|_{L^2} 
L_2^{\frac{1}{3}} \| g_2^{L_2,N_2}\|_{L^2} \, , $$
so that by interpolation we arrive at
\begin{eqnarray*}
\lefteqn{ |I(f^{L,N},g_1^{L_1,N_1},g_2^{L_2,N_2})| } \\
& \lesssim & N_1^{-\frac{1}{2}+} L^{\frac{1}{2}-} \| f^{L,N}\|_{L^2} 
L_1^{\frac{1}{4}+} \| g_1^{L_1,N_1}\|_{L^2} L_2^{\frac{1}{4}+} \| 
g_2^{L_2,N_2}\|_{L^2} \\
& \lesssim & N_1^{s-} N_2^{s-} N^{1-\sigma -} L^{\frac{1}{2}-} \| 
f^{L,N}\|_{L^2} L_1^{\frac{1}{4}+} \| g_1^{L_1,N_1}\|_{L^2} L_2^{\frac{1}{4}+} 
\| g_2^{L_2,N_2}\|_{L^2} \, , 
\end{eqnarray*}
using $ s > - \frac{1}{4}$ . Dyadic summation again gives the claimed result.
\end{proof}

\begin{proof} [Proof of Theorem \ref{Theorem 2.1}] It is by now standard to use 
Proposition \ref{Prop. 2.2} and Proposition 
\ref{Prop. 2.3} to show the local well-posedness result (Theorem \ref{Theorem 
2.1}) for the 
system (\ref{0.1'}),(\ref{0.2'}),(\ref{0.3'}) as an application of the 
contraction mapping principle. For details 
of the method we refer to \cite{GTV}. This solution then immediately leads to a 
solution of the Klein-Gordon-Schr\"odinger system 
(\ref{0.1}),(\ref{0.2}),(\ref{0.3}) with the required properties as explained 
before Theorem \ref{Theorem 2.1}.

Moreover, if $(u,n,\partial_t n)$ is a solution of (the system of integral 
equations belonging to) (\ref{0.1}),(\ref{0.1}),(\ref{0.3}) with $u \in 
X^{s,\frac{1}{2}+}[0,T]$ and data $u_0 \in H^s$, $n_0 \in H^{\sigma}$, $n_1 \in 
H^{\sigma}$, then $n_{\pm}$ defined by (\ref{9}) belongs to 
$X_{\pm}^{\sigma,\frac{1}{2}+}[0,T]$ by Proposition \ref{Prop. 2.3} and thus 
$n= 
\frac{1}{2}(n_+ + n_-)$ belongs to $X_+^{\sigma,\frac{1}{2}+}[0,T] + 
X_-^{\sigma,\frac{1}{2}+}[0,T]$ and 
$\partial_t n = \frac{1}{2i} A^{\frac{1}{2}}(n_+ - n_-)$ belongs to $ 
X_+^{\sigma -1,\frac{1}{2}+,}[0,T] + X_-^{\sigma -1,\frac{1}{2}+}[0,T]$ , and
one easily checks that $(u,n_+,n_-)$ is a solution of the system (of integral 
equations belonging to) 
(\ref{0.1'}),(\ref{0.2'}),(\ref{0.3'}). But because this solution is uniquely 
determined the solution of the Klein - Gordon - Schr\"odinger system is also 
unique.

For the part concerning unconditional uniqueness we use an idea 
of Y. Zhou \cite{Z},\cite{Z1}, which we already applied in \cite[Prop. 3.1]{P}.
Let
$$ (u,n,\partial_t n) \in C^0([0,T],L^2({\mathbb R}^2)) \times  
C^0([0,T],L^2({\mathbb R}^2)) 
\times C^0([0,T],H^{-1}({\mathbb R}^2)) $$ 
be any solution of the Klein-Gordon-Schr\"odinger 
system (\ref{0.1}),(\ref{0.2}),(\ref{0.3}). This leads to a corresponding 
solution of 
the system (\ref{0.1'}),(\ref{0.2'}),(\ref{0.3'}) with $$ (u,n_+,n_-) \in 
C^0([0,T],L^2({\mathbb R}^2)) \times  C^0([0,T],L^2({\mathbb 
R}^2)) 
\times C^0([0,T],L^2({\mathbb R}^2)) \, .$$ 
By Sobolev's embedding theorem we get
\begin{eqnarray*}
\| n_{\pm} u\|_{L^2((0,T),H^{-1-})} & \lesssim & \|n_{\pm} u 
\|_{L^2((0,T),L^1)} \\
& \lesssim & T^{\frac{1}{2}} \|n_{\pm}\|_{L^{\infty}((0,T);L^2)} 
\|u\|_{L^{\infty}((0,T),L^2)} < \infty \, . 
\end{eqnarray*}
so that from (\ref{0.1'}) we have $ u \in X^{-1-,1}[0,T]$ , because
$$ \|(i\partial_t + \Delta)u\|^2_{L^2((0,T),H^{-1-})} + 
\|u\|^2_{L^2((0,T),H^{-1-})} \sim \|u\|^2_{X^{-1-,1}[0,T]} 
< \infty \, . $$
Interpolation with $u \in X^{0,0}[0,T]$ gives 
$ u \in X^{-\frac{1}{2}-, \frac{1}{2}+}[0,T].$ 
Similarly we get
$$ \| |u|^2 \|_{L^2((0,T),H^{-1-})} \lesssim T^{\frac{1}{2}} 
\|u\|^2_{L^{\infty}((0,T),L^2)} < \infty $$
and from 
(\ref{0.2'}) we conclude $n_{\pm} \in X_{\pm}^{0-,1}[0,T]$ .
Proposition \ref{Prop. 2.2'} shows that $un_{\pm}$ $ \in 
X^{-\epsilon,-\frac{1}{2}+}[0,T]$, thus $u\in X^{-\epsilon,\frac{1}{2}+}[0,T]$ 
and $n_{\pm} \in X_{\pm}^{0-,1}[0,T]$ for any $\epsilon > 0.$ 
But in these spaces uniqueness holds by the first part of this proof, so that 
unconditional uniqueness is also proven.
\end{proof} 

\section{Global well-posedness results for the case $D=2$}
We first show a modified local well-posedness result in arbitrary space 
dimension $D$.
\begin{prop}
\label{Prop. 3.1}
Let $u_0 \in L^2({\mathbb R}^D) \, , \, n_0 \in H^{\sigma}({\mathbb R}^D) \, , 
\, n_1 \in H^{\sigma -1}({\mathbb R}^D)$ and $T \le 1$. Assume
\begin{equation}
\label{12}
\|un\|_{X^{0,-\frac{1}{2}+}} \lesssim T^l \|u\|_{X^{0,\frac{1}{2}}} 
\|n\|_{X_{\pm}^{\sigma,\frac{1}{2}}}
\end{equation}
and
\begin{equation}
\label{13}
\| |u|^2 \|_{X_{\pm}^{\sigma -1,-\frac{1}{2}+}} \lesssim T^k 
\|u\|_{X^{0,\frac{1}{2}}}^2 \, ,
\end{equation}
where $k,l > 0$. \\
Then there exists $ 1 \ge T > 0 $ such that the system of integral equations
(\ref{8}),(\ref{9}) has a unique solution $u \in X^{0,\frac{1}{2}+}[0,T] $ , $ 
n_{\pm} \in X_{\pm}^{\sigma,\frac{1}{2}+}[0,T]$ . \\
 $n_{\pm}$ fulfills for $0 \le 
t \le T$ :
\begin{equation}
\label{13'}
\|n_{\pm}(t)\|_{H^{\sigma}} \le \|n_{\pm 0}\|_{H^{\sigma}} + cT^k 
\|u_0\|_{L^2}^2 
\, .
\end{equation}
$T$ can be chosen such that
\begin{eqnarray}
\label{14}
T^l (\|n_{+ 0}\|_{H^{\sigma}} + \|n_{- 0}\|_{H^{\sigma}}) & \lesssim & 1 \\
\label{15}
T^l \|u_0\|_{L^2} & \lesssim & 1 \\
\label{15'}
T^k \|u_0\|_{L^2} & \lesssim & 1 \\
\label{16}
T^k \|u_0\|_{L^2}^2 & \lesssim & \|n_{+ 0}\|_{H^{\sigma}} + \|n_{- 
0}\|_{H^{\sigma}} \, .
\end{eqnarray}
\end{prop}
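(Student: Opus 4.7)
The plan is to apply the Banach contraction mapping principle to the Duhamel map
\begin{align*}
\Phi_1(u, n_+, n_-)(t) &= e^{it\Delta} u_0 + \tfrac{1}{2}\int_0^t e^{i(t-\tau)\Delta} (n_+ + n_-) u \, d\tau, \\
\Phi_2^{\pm}(u)(t) &= e^{\mp it A^{1/2}} n_{\pm 0} \pm i \int_0^t e^{\mp i(t-\tau) A^{1/2}} A^{-1/2}(|u|^2)\, d\tau,
\end{align*}
on the closed ball
$B_T = \{(u, n_+, n_-) : \|u\|_{X^{0, 1/2+}[0,T]} \le R_1, \ \|n_{\pm}\|_{X_{\pm}^{\sigma, 1/2+}[0,T]} \le R_2\}$
with $R_1 := 2c \|u_0\|_{L^2}$ and $R_2 := 2c(\|n_{+0}\|_{H^\sigma} + \|n_{-0}\|_{H^\sigma}) + 2c T^k \|u_0\|_{L^2}^2$, the latter shaped to match the desired bound \eqref{13'}.

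First, I would invoke the standard linear and inhomogeneous estimates in Bourgain type spaces at level $b = 1/2+$, namely $\|e^{it\Delta} u_0\|_{X^{0, 1/2+}[0,T]} \lesssim \|u_0\|_{L^2}$ and $\|\int_0^t e^{i(t-\tau)\Delta} F\, d\tau\|_{X^{0,1/2+}[0,T]} \lesssim \|F\|_{X^{0,-1/2+}[0,T]}$, together with their $X_{\pm}$-analogues for the half-Klein-Gordon propagator; the $A^{-1/2}$ factor in $\Phi_2^{\pm}$ pairs naturally with the loss of one derivative from $X_{\pm}^{\sigma,\cdot}$ to $X_{\pm}^{\sigma-1,\cdot}$. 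Combining these with the bilinear hypotheses \eqref{12} and \eqref{13}, and using $\|u\|_{X^{0,1/2}[0,T]} \le \|u\|_{X^{0,1/2+}[0,T]}$ (similarly for $n_{\pm}$), one obtains on $B_T$ the self-mapping bounds
\begin{align*}
\|\Phi_1(u, n_+, n_-)\|_{X^{0,1/2+}[0,T]} &\le c\|u_0\|_{L^2} + c T^l R_1 R_2,\\
\|\Phi_2^{\pm}(u)\|_{X_{\pm}^{\sigma, 1/2+}[0,T]} &\le c\|n_{\pm 0}\|_{H^\sigma} + c T^k R_1^2.
\end{align*}
Then \eqref{14} forces $c T^l R_2 \le 1/2$ and \eqref{16} forces $c T^k R_1^2 \le R_2/2$, so $\Phi$ maps $B_T$ into itself. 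For the contraction property, the bilinear/conjugate-linear structure of the nonlinearities yields Lipschitz constants proportional to $T^l R_2$, $T^l R_1$, and $T^k R_1$ in the appropriate variables, and conditions \eqref{14}, \eqref{15}, \eqref{15'} make all three at most $1/2$. Banach's theorem then delivers the unique fixed point.

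The pointwise bound \eqref{13'} is then immediate from the Duhamel formula \eqref{9}: the embedding $X_{\pm}^{\sigma, 1/2+}[0,T] \hookrightarrow C^0([0,T]; H^\sigma)$ applied to the inhomogeneous term, together with \eqref{13} and the a priori bound $\|u\|_{X^{0, 1/2+}[0,T]} \le R_1 \lesssim \|u_0\|_{L^2}$ on the fixed point, gives
\begin{equation*}
\|n_{\pm}(t)\|_{H^\sigma} \le \|n_{\pm 0}\|_{H^\sigma} + c T^k \|u\|_{X^{0, 1/2+}[0,T]}^2 \le \|n_{\pm 0}\|_{H^\sigma} + c T^k \|u_0\|_{L^2}^2.
\end{equation*}
There is no substantial analytic obstacle here beyond the bookkeeping needed to balance the four smallness conditions \eqref{14}--\eqref{16}: each one corresponds precisely to one of the four places in the argument where smallness of $T$ is required (self-mapping of $\Phi_1$ and of $\Phi_2^{\pm}$, and contraction in each of the variables $u$ and $n_{\pm}$). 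The delicate point is to note that $R_2$ itself contains the $T^k\|u_0\|_{L^2}^2$ term, so that \eqref{16} is actually consistent with \eqref{14}, \eqref{15}, \eqref{15'} in a regime where $T$ may be chosen independently of the initial datum size in the expected way.
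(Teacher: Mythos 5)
Your proposal is correct and follows essentially the same route as the paper: a contraction mapping argument in $X^{0,\frac{1}{2}+}[0,T]\times X_{\pm}^{\sigma,\frac{1}{2}+}[0,T]$ using the standard linear estimates together with the hypotheses (\ref{12}), (\ref{13}), with (\ref{14}) and (\ref{16}) giving the self-mapping property, (\ref{14}), (\ref{15}), (\ref{15'}) giving the contraction, and (\ref{13'}) read off from the Duhamel formula (\ref{9}) via the unitarity of $e^{\mp itA^{1/2}}$ on $H^{\sigma}$ and the a priori bound $\|u\|_{X^{0,\frac{1}{2}+}}\lesssim\|u_0\|_{L^2}$. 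The only cosmetic difference is that you carry the $T^k\|u_0\|_{L^2}^2$ term explicitly in the radius $R_2$, whereas the paper absorbs it into $\|n_{+0}\|_{H^{\sigma}}+\|n_{-0}\|_{H^{\sigma}}$ via (\ref{16}).
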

\noindent Remark: No implicit constant appears on the right hand side of 
(\ref{13'}).
\begin{proof}
We construct a fixed point of $S=(S_0,S_+,S_-)$ in 
\begin{eqnarray*}
M:& = & \{ u \in X^{0,\frac{1}{2}+}[0,T] \, , \, n_{\pm} \in 
X_{\pm}^{\sigma,\frac{1}{2}+}[0,T] : \\
& & \, \|u\|_{X^{0,\frac{1}{2}+}} \lesssim \|u_0\|_{L^2} \, , \, 
\|n_+\|_{X_+^{\sigma,\frac{1}{2}+}} + \|n_-\|_{X_-^{\sigma,\frac{1}{2}+}} 
\lesssim \|n_{+0}\|_{H^{\sigma}} + \|n_{-0}\|_{H^{\sigma}} \}  , 
\end{eqnarray*}
where $S_0 u$ and $S_{\pm} n_{\pm}$ denote the right hand sides of our integral 
equations (\ref{8}) and (\ref{9}). Then we get for $u,n_{\pm} \in M$:
\begin{eqnarray*}
\|S_0 u\|_{X^{0,\frac{1}{2}+}} & \lesssim & \|u_0\|_{L^2} + T^l 
\|u\|_{X^{0,\frac{1}{2}+}} (\|n_+\|_{X^{\sigma,\frac{1}{2}+}_+} + 
\|n_-\|_{X^{\sigma,\frac{1}{2}+}_+}) \\
& \lesssim & \|u_0\|_{L^2} + T^l \|u_0\|_{L^2} (\|n_{+0}\|_{H^{\sigma}} + 
\|n_{-0}\|_{H^{\sigma}}) \quad \lesssim \quad \|u_0\|_{L^2}
\end{eqnarray*}
by (\ref{14}), and
\begin{eqnarray*}
\|S_+ n_+\|_{X^{\sigma,\frac{1}{2}+}_+} + \|S_- 
n_-\|_{X^{\sigma,\frac{1}{2}+}_-} & \lesssim & \|n_{+0}\|_{H^{\sigma}} +  
\|n_{-0}\|_{H^{\sigma}} + T^k \|u\|_{X^{0,\frac{1}{2}+}}^2 \\
& \lesssim & \|n_{+0}\|_{H^{\sigma}} +  \|n_{-0}\|_{H^{\sigma}} + T^k 
\|u_0\|_{L^2}^2 \\
& \lesssim & \|n_{+0}\|_{H^{\sigma}} +  \|n_{-0}\|_{H^{\sigma}}
\end{eqnarray*}
by (\ref{16}), such that $ S: M \to M $ .

In order to show the contraction property we estimate as follows. For 
$(u,n_{\pm}),$ $ (\tilde{u},\tilde{n}_{\pm}) \in M$ we get
\begin{eqnarray*}
\lefteqn{\|S_0 u - S_0 \tilde{u}\|_{X^{0,\frac{1}{2}+}} } \\
& \lesssim & T^l ( \|u-\tilde{u}\|_{X^{0,\frac{1}{2}+}} 
(\|n_+\|_{X^{\sigma,\frac{1}{2}+}_+} + 
\|\tilde{n}_+\|_{X^{\sigma,\frac{1}{2}+}_+} + 
\|n_-\|_{X^{\sigma,\frac{1}{2}+}_-} + 
\|\tilde{n}_-\|_{X^{\sigma,\frac{1}{2}+}_-}) \\
& & + (\|u\|_{X^{0,\frac{1}{2}+}} + \|\tilde{u}\|_{X^{0,\frac{1}{2}+}})(\|n_+ - 
\tilde{n}_+\|_{X_+^{\sigma,\frac{1}{2}+}} + \|n_- - 
\tilde{n}_-\|_{X_-^{\sigma,\frac{1}{2}+}})) \\
& \lesssim & T^l ( \|u-\tilde{u}\|_{X^{0,\frac{1}{2}+}} 
(\|n_{+0}\|_{H^{\sigma}} 
+ \|n_{-0}\|_{H^{\sigma}}  \\
& & + \|u_0\|_{L^2} (\|n_+ - \tilde{n}_+\|_{X_+^{\sigma,\frac{1}{2}+}} + \|n_- 
- 
\tilde{n}_-\|_{X_-^{\sigma,\frac{1}{2}+}})) \\
& \le & \frac{1}{2} (\|u-\tilde{u}\|_{X^{0,\frac{1}{2}+}} + \|n_+ - 
\tilde{n}_+\|_{X_+^{\sigma,\frac{1}{2}+}} + \|n_- - 
\tilde{n}_-\|_{X_-^{\sigma,\frac{1}{2}+}})
\end{eqnarray*}
by (\ref{14}) and (\ref{15}). Similarly
\begin{eqnarray*}
\lefteqn{\|S_+ n_+ - S_+ \tilde{n}_+\|_{X_+^{\sigma,\frac{1}{2}+}} + \|S_- n_- 
- 
S_- \tilde{n}_-\|_{X_-^{\sigma,\frac{1}{2}+}} ,} \\
& \lesssim & T^k (\|u\|_{X^{0,\frac{1}{2}+}} + 
\|\tilde{u}\|_{X^{0,\frac{1}{2}+}}) \|u - \tilde{u}\|_{X^{0,\frac{1}{2}+}} \\
& \lesssim & T^k \|u_0\|_{L^2}  \|u - \tilde{u}\|_{X^{0,\frac{1}{2}+}} \\
& \le & \frac{1}{2}  \|u - \tilde{u}\|_{X^{0,\frac{1}{2}+}}
\end{eqnarray*}
by (\ref{15'}). Thus the contraction mapping principle gives a unique solution 
in $[0,T]$. This solution fulfills $\|u(t)\|_{L^2} = \|u_0\|_{L^2}$.

Moreover we get from the integral equations (\ref{9}) for $0 \le t \le T$:
$$ \|n_{\pm}(t)\|_{H^{\sigma}} \le \|n_{\pm 0}\|_{H^{\sigma}} + cT^k 
\|u\|_{X^{0,\frac{1}{2}+}}^2 \le \|n_{\pm 0}\|_{H^{\sigma}} + cT^k 
\|u_0\|_{L^2}^2 \, , $$
using that $e^{\mp itA^{\frac{1}{2}}}$ is unitary.
\end{proof}
This version of the local well-posedness result will now be used to show the 
global well-posedness result Theorem \ref{Theorem 3.2}. We first 
show
\begin{prop}
\label{Prop. 3.2}
In space dimension $D=2$ assume $-\frac{1}{2} \le \sigma < \frac{3}{2}$ and $T 
\le 1$. Then the estimates (\ref{12}) and (\ref{13}) hold with $ 1 > k,l \ge 
\frac{1}{4}-$ and $k+l \ge \frac{5}{4}-$.
\end{prop}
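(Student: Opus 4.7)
The proof of Proposition \ref{Prop. 3.2} reduces to establishing (\ref{12}) with $l \ge 1/4-$ and (\ref{13}) with $k \ge 1-$, so that both $k, l \ge 1/4-$ and $k + l \ge 5/4-$ hold.

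For (\ref{12}), the plan is to combine Proposition \ref{Prop. 2.1} with Lemma \ref{Lemma}. The key observation is that Proposition \ref{Prop. 2.1} uses $b$-exponents at $\pm 5/12$ rather than the borderline $\pm 1/2$, leaving slack of size $1/12$ at each of three slots. Since $\sigma \ge -1/2$, the norm $\|n\|_{X^{-1/2,5/12+}_{\pm}}$ appearing on the right of Proposition \ref{Prop. 2.1} is dominated by $\|n\|_{X^{\sigma,5/12+}_{\pm}}$. Three applications of Lemma \ref{Lemma}---once on the left-hand side (using the range $-1/2<b'<b\le 0$) to pass from $X^{0,-5/12-}[0,T]$ to $X^{0,-1/2+}[0,T]$, and once on each of the two right-hand-side factors (using $0\le b'<b<1/2$) to pass from $X^{0,5/12+}[0,T]$ and $X^{\sigma,5/12+}_{\pm}[0,T]$ to the corresponding $1/2-$ spaces, which are then trivially dominated by the $1/2$ spaces---each yield a gain of $T^{1/12-}$. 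Multiplying produces the factor $T^{1/4-}$, so $l\ge 1/4-$.

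For (\ref{13}), Proposition \ref{Prop. 2.3} at $s=0$ (valid since $\sigma<3/2$) gives $\||u|^2\|_{X^{\sigma-1,-1/2+}_{\pm}} \lesssim \|u\|^2_{X^{0,1/2-}}$, but its $b$-exponents are essentially sharp and Lemma \ref{Lemma} alone affords only a negligible $T$-power. Instead, my plan is to route the estimate via the trivial embedding $X^{\sigma-1,0}_{\pm}\hookrightarrow X^{\sigma-1,-1/2+}_{\pm}$, reducing the claim to
\[
\||u|^2\|_{L^2_tH^{\sigma-1}_x[0,T]}\lesssim T^k\|u\|_{X^{0,1/2}[0,T]}^2,
\]
and then combine H\"older in time (gaining $T^{1/2}$ from $L^\infty_t$ to $L^2_t$), 2D Schr\"odinger Strichartz, and 2D Sobolev embeddings---using $L^1_x\hookrightarrow H^{\sigma-1}_x$ for $\sigma<0$ and an admissible Strichartz pair $(q,r)$ together with $L^{r/2}_x\hookrightarrow H^{\sigma-1}_x$ for $0\le\sigma<3/2$---to push $k$ close to $1$.

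The main obstacle is to cover the full range $-1/2\le\sigma<3/2$ uniformly: for $\sigma$ near $3/2$ the Sobolev embedding $L^p_x\hookrightarrow H^{\sigma-1}_x$ is borderline (forcing $p$ to approach $4$), and the right-hand side of (\ref{13}) is anchored at the Bourgain index $b=1/2$, which does not embed cleanly into $L^\infty_t L^2_x$. These endpoint issues are navigated by case analysis on $\sigma$ combined with repeated invocation of Lemma \ref{Lemma} to trade small amounts of $b$-regularity for $T$-powers at the cost of tiny $\epsilon$-losses.
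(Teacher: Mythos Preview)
Your argument for (\ref{12}) is fine: Proposition \ref{Prop. 2.1} plus three applications of Lemma \ref{Lemma} indeed yield $l=\frac14-$ for every $\sigma\ge-\frac12$. The problem is your claim that (\ref{13}) can be obtained with $k\ge 1-$ uniformly in $\sigma$. Your route via $\||u|^2\|_{L^2_tH^{\sigma-1}_x}$ (or, equivalently, via $L^{1+}_tH^{\sigma-1}_x$ after using Lemma \ref{Lemma} on the left) forces you to place $|u|^2$ in $L^{r/2}_x$ with $r\ge \frac{4}{2-\sigma}$ so that $L^{r/2}_x\hookrightarrow H^{\sigma-1}_x$ holds in 2D. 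The matching Strichartz index is $q=\frac{4}{\sigma}$, so $|u|^2\in L^{2/\sigma}_t L^{r/2}_x$, and H\"older in time down to $L^{1+}_t$ yields only $T^{1-\frac{\sigma}{2}-}$. Thus the best you can get along this route is $k=1-\frac{\sigma}{2}-$, which combined with your fixed $l=\frac14-$ gives $k+l=\frac54-\frac{\sigma}{2}-$; this falls below $\frac54-$ as soon as $\sigma>0$, and degenerates to about $\frac12$ as $\sigma\to\frac32$. Your reduction in the first paragraph to ``$k\ge 1-$'' is therefore not achievable for $\sigma>0$, and the proof has a genuine gap on the range $0<\sigma<\frac32$.

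What the paper does differently is to make $l$ depend on $\sigma$. It proves a second, trivial bilinear estimate at high wave regularity,
\[
\|un\|_{X^{0,-\frac12+}}\lesssim T^{1-}\|u\|_{X^{0,\frac12-}}\|n\|_{X^{1+,\frac12-}_{\pm}},
\]
obtained by H\"older ($L^3_tL^\infty_x\cdot L^3_tL^2_x\cdot L^3_tL^2_x$) plus Sobolev and Lemma \ref{Lemma}, and interpolates it with the $T^{\frac14-}$ estimate coming from Proposition \ref{Prop. 2.1} at wave regularity $-\frac12$. This gives $l=\frac12+\frac{\sigma}{2}-$ for $-\frac12\le\sigma\le1$ (and $l=1-$ for $1\le\sigma<\frac32$). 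By duality the same interpolated estimate yields $k=1-\frac{\sigma}{2}-$ for $0\le\sigma<\frac32$ (and $k=1-$ for $\sigma\le0$), and now $k+l\ge\frac54-$ holds across the full range. The missing ingredient in your plan is precisely this second endpoint estimate for (\ref{12}) and the interpolation; without it you cannot compensate the unavoidable loss in $k$ when $\sigma>0$.
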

\begin{proof}
We estimate $I$ (defined by (\ref{I})) by H\"older's inequality and Sobolev's 
embedding:
$$ |I(\widehat{n},\widehat{u}_1,\widehat{u}_2)| \lesssim \|n\|_{L_t^3 
L_x^{\infty}} \|u_1\|_{L_t^3 L_x^2} \|u_2\|_{L_t^3 L_x^2} \lesssim 
\|n\|_{X^{1+,\frac{1}{6}}_{\pm}} \|u_1\|_{X^{0,\frac{1}{6}}} 
\|u_2\|_{X^{0,\frac{1}{6}}} \, . $$
Thus
$$ \|un\|_{X^{0,-\frac{1}{6}}} \lesssim \|u\|_{X^{0,\frac{1}{6}}} 
\|n\|_{X_{\pm}^{1+,\frac{1}{6}}} \, . $$
This implies by Lemma \ref{Lemma}:
\begin{eqnarray*}
\lefteqn{\|un\|_{X^{0,-\frac{1}{2}+}}} \\
& \lesssim & T^{\frac{1}{3}-} \|un\|_{X^{0,-\frac{1}{6}}} \lesssim 
T^{\frac{1}{3}-} \|u\|_{X^{0,\frac{1}{6}}} \|n\|_{X_{\pm}^{1+,\frac{1}{6}}} 
\lesssim T^{1-} \|u\|_{X^{0,\frac{1}{2}-}} \|n\|_{X_{\pm}^{1+,\frac{1}{2}-}} 
\,. 
\end{eqnarray*} 
Moreover by Proposition \ref{Prop. 2.1}:
\begin{eqnarray*}
\|un\|_{X^{0,-\frac{1}{2}+}} 
& \lesssim & T^{\frac{1}{12}-} \|un\|_{X^{0,-\frac{5}{12}-}} \\ 
&\lesssim & T^{\frac{1}{12}-} \|u\|_{X^{0,\frac{5}{12}+}} 
\|n\|_{X_{\pm}^{-\frac{1}{2},\frac{5}{12}+}} \lesssim T^{\frac{1}{4}-} 
\|u\|_{X^{0,\frac{1}{2}-}} \|n\|_{X_{\pm}^{1+,\frac{1}{2}-}} \,.
\end{eqnarray*}
Interpolation gives for $ 0 \le \theta \le 1$ :
$$
\|un\|_{X^{0,-\frac{1}{2}+}} 
\lesssim  T^{1-\frac{3}{4}\theta -} \|u\|_{X^{0,\frac{1}{2}-}} 
\|n\|_{X_{\pm}^{1-\frac{3}{2}\theta,\frac{1}{2}-}} \,.
$$
By duality we also get
$$ \| |u|^2\|_{X_{\pm}^{\frac{3}{2}\theta -1,-\frac{1}{2}+}} \lesssim 
T^{1-\frac{3}{4}\theta-} \|u\|^2_{X^{0,\frac{1}{2}-}} \, . $$
\begin{itemize}
\item If $-\frac{1}{2}\le \sigma \le 1$ we choose $\theta = 
\frac{2}{3}(1-\sigma)$ and get
$$
\|un\|_{X^{0,-\frac{1}{2}+}} 
\lesssim  T^{\frac{1}{2}+\frac{\sigma}{2} -} \|u\|_{X^{0,\frac{1}{2}-}} 
\|n\|_{X_{\pm}^{\sigma,\frac{1}{2}-}} \,.
$$
\item If $1 \le \sigma < \frac{3}{2}$ we choose $\theta = 0$ and get
$$
\|un\|_{X^{0,-\frac{1}{2}+}} 
\lesssim  T^{1 -} \|u\|_{X^{0,\frac{1}{2}-}} \|n\|_{X_{\pm}^{1,\frac{1}{2}-}}
\lesssim  T^{1 -} \|u\|_{X^{0,\frac{1}{2}-}} 
\|n\|_{X_{\pm}^{\sigma,\frac{1}{2}-}} \,.
$$ 
\item If $-\frac{1}{2} \le \sigma \le 0$ we choose $\theta = 0$ and get
$$\| |u|^2\|_{X_{\pm}^{\sigma -1,-\frac{1}{2}+}} \lesssim \| |u|^2 
\|_{X_{\pm}^{ 
-1,-\frac{1}{2}+}} \lesssim T^{1-} \|u\|^2_{X^{0,\frac{1}{2}-}} \, . $$
\item If $0 \le \sigma < \frac{3}{2}$ we choose $\theta = \frac{2}{3} \sigma$ 
and get
$$\| |u|^2\|_{X_{\pm}^{\sigma -1,-\frac{1}{2}+}} \lesssim 
T^{1-\frac{\sigma}{2}} 
\|u\|^2_{X^{0,\frac{1}{2}-}} \, . $$
\end{itemize}
Thus we conclude that (\ref{12}),(\ref{13}) hold:
\begin{itemize}
\item
if $-\frac{1}{2} \le \sigma \le 0$ with $k=\frac{1}{2}+\frac{\sigma}{2}-$ , 
$l=1-$ $ \Rightarrow k+l = \frac{3}{2}+ \frac{\sigma}{2}- \ge \frac{5}{4}- \, 
,$
\item if $0 \le \sigma \le 1$  with $k=\frac{1}{2}+\frac{\sigma}{2}-$ , 
$l=1-\frac{\sigma}{2}-$ $ \Rightarrow k+l = \frac{3}{2}-  \, ,$
\item if $1 \le \sigma < \frac{3}{2} $  with $k=1-$ , $l=1-\frac{\sigma}{2}-$ $ 
\Rightarrow k+l = 2 -\frac{\sigma}{2}+ > \frac{5}{4} - \, .$ 
\end{itemize}
\end{proof}

\begin{proof} [Proof of Theorem \ref{Theorem 3.2}]
By persistence of higher regularity it suffices to consider the case $s=0$ and 
$-\frac{1}{2} \le \sigma < \frac{3}{2}$.
We first use our local well-posedness result Theorem \ref{Theorem 2.1} which 
gives under our assumptions a local solution. Because $\|u(t)\|_{L^2}$ is 
conserved this solution exists as long as $\|n_+(t)\|_{H^{\sigma}} + 
\|n_-(t)\|_{H^{\sigma}}$ remains bounded. If this is the case for any $t > 0$ 
we 
are done. Otherwise we can suppose that at some time $t$ we have
$$ \|n_+(t)\|_{H^{\sigma}} +  \|n_-(t)\|_{H^{\sigma}} \gg \|u(t)\|_{L^2}^2 +1 = 
\|u_0\|_{L^2}^2 +1 \, . $$
Take this time as initial time $t=0$ so that
$$ \|n_{+0}\|_{H^{\sigma}} +  \|n_{-0}\|_{H^{\sigma}} \gg \|u_0\|_{L^2}^2 +1 \, 
. $$
We want to apply now our modified local well-posedness result Proposition 
\ref{Prop. 3.1}. The estimates (\ref{12}) and (\ref{13}) are fulfilled by 
Proposition \ref{Prop. 3.2} with $k+l \ge \frac{5}{4}- > 1$ and $1 > k,l \ge 
\frac{1}{4}- $. Estimate (\ref{16}) is also fulfilled.  We now choose $T$ such 
that (\ref{14}) and (\ref{15'}) are satisfied, namely
$$ T \sim \min \left( \frac{1}{(\|n_{+0}\|_{H^{\sigma}} + 
\|n_{-0}\|_{H^{\sigma}})^{\frac{1}{l}}} , \frac{1}{\|u_0\|_{L^2}^{\frac{1}{k}}} 
, 1 \right) \, . $$
Then (\ref{15}) is automatically satisfied, because (\ref{14}) holds and $ 
\|n_{+0}\|_{H^{\sigma}} + \|n_{-0}\|_{H^{\sigma}} \gtrsim \|u_0\|_{L^2} $. 
Using 
(\ref{13'}) we see that it is possible to use this local existence theorem $m$ 
times with intervals of length $T$, before $\|n_+(t)\|_{H^{\sigma}} + 
\|n_-(t)\|_{H^{\sigma}}$ at most doubles. Here we have
$$m \sim \frac{\|n_{+0}\|_{H^{\sigma}} + \|n_{-0}\|_{H^{\sigma}}}{T^k 
\|u_0\|_{L^2}^2 } \, . $$
After $m$ iterations we arrive at the time
\begin{eqnarray*}
mT & \sim & \frac{ T^{1-k} (\|n_{+0}\|_{H^{\sigma}} + 
\|n_{-0}\|_{H^{\sigma}})}{\|u_0\|_{L^2}^2 }\\
& \sim & \frac{ \min ( \frac{ \|n_{+0}\|_{H^{\sigma}} + \|n_{-0}\|_{H^{\sigma}} 
}{ (\|n_{+0}\|_{H^{\sigma}} + \|n_{-0}\|_{H^{\sigma}})^{\frac{1-k}{l}}} , 
\frac{ 
\|n_{+0}\|_{H^{\sigma}} + \|n_{-0}\|_{H^{\sigma}} }{ 
\|u_0\|_{L^2}^{\frac{1-k}{k}}} , \|n_{+0}\|_{H^{\sigma}} + 
\|n_{-0}\|_{H^{\sigma}} ) }{\|u_0\|_{L^2}^2} \\
& \sim & \min ( \frac{ (\|n_{+0}\|_{H^{\sigma}} + 
\|n_{-0}\|_{H^{\sigma}})^{\frac{k+l-1}{l}} }{\|u_0\|_{L^2}^2} , \frac{ 
\|n_{+0}\|_{H^{\sigma}} + \|n_{-0}\|_{H^{\sigma}} }{ 
\|u_0\|_{L^2}^{\frac{1-k}{k}+2}} , \\
& & \quad \quad \frac{\|n_{+0}\|_{H^{\sigma}} + 
\|n_{-0}\|_{H^{\sigma}}}{\|u_0\|_{L^2}^2} )\\
& \gtrsim & \min ( \frac{1}{\|u_0\|_{L^2}^{3+}},1 )
\end{eqnarray*}
using $k+l > 1$ , $k \ge \frac{1}{4}-$ and $ \|n_{+0}\|_{H^{\sigma}} +  
\|n_{-0}\|_{H^{\sigma}} \gg \|u_0\|_{L^2}^2$ . This is independent of  $ 
\|n_{+0}\|_{H^{\sigma}} +  \|n_{-0}\|_{H^{\sigma}}$. Using conservation of 
$\|u(t)\|_{L^2}$ again it is thus possible to repeat the whole procedure with 
time steps of equal length. This proves the global existence result.

In the range $ -\frac{1}{2} \le \sigma < \frac{1}{4}$ we can give a much easier 
proof using Strichartz' estimates for the Klein-Gordon equation as follows. In 
order to estimate the wave part we get from the integral equation (\ref{9}):
$$ \|n_{\pm}\|_{L^{\infty}((0,T), H^{\sigma})} \lesssim \|n_{\pm 
0}\|_{H^{\sigma}} + \||u|^2\|_{L^{\frac{4}{3}-}((0,T), H^{\sigma + \rho,1+}) }\, 
$$
where we defined $\tilde{q}= 4+$ , $\tilde{r}=\infty -$ such that 
$\frac{2}{\tilde{q}}+\frac{1}{\tilde{r}} = 
\frac{1}{2}$, and moreover 
$0=1+\rho-2(\frac{1}{2}-\frac{1}{\tilde{r}})+\frac{1}{\tilde{q}}$ 
$\Longleftrightarrow$ $\rho = -\frac{1}{4}+$ , so that $\sigma + \rho < 0$ . 
Thus by Sobolev's embedding and conservation of $\|u(t)\|_{L^2}$:
$$ \|n_{\pm}\|_{L^{\infty}((0,T), H^{\sigma})} \lesssim \|n_{\pm 
0}\|_{H^{\sigma}} + \||u|^2\|_{L^{\frac{4}{3}-}((0,T), L^1)} \lesssim \|n_{\pm 
0}\|_{H^{\sigma}} + T^{\frac{3}{4}-} \|u_0\|_{L^2}^2 \, $$
which implies global existence.
\end{proof}

\begin{proof} [Proof of Theorem \ref{Theorem 5.1} for $D=2$]
Using persistence of regularity it suffices to consider the case $s=0$ , $0 \le 
\sigma \le 1$. Let $T \le 1$ and $ \frac{1}{q} + \frac{1}{r} = \frac{1}{2} $ , 
$r < \infty $.
Using the notation from the proof of Proposition \ref{Prop. 3.1} we get by 
Strichartz' estimates for the Schr\"odinger equation:
\begin{eqnarray*}
\|S_0 u\|_{L^{\infty}((0,T),L^2) \cap L^q((0,T), L^r)} & \lesssim & 
\|u_0\|_{L^2} + 
\|nu\|_{L^{\tilde{q}'}((0,T), L^{\tilde{r}'})} \\
& \lesssim & \|u_0\|_{L^2} + \|n\|_{L^{\bar{q}}((0,T),L^{\bar{r}})}  
\|u\|_{L^{q_0}((0,T), 
L^{r_0})}  \, .
\end{eqnarray*}
Here $\frac{1}{r_0}:= \frac{1}{2}-\epsilon$ and $\frac{1}{q_0} = \epsilon $ for 
a sufficiently small $\epsilon > 0$.\\
{\bf a.} $ 0 \le \sigma < 1 $.\\ 
Here $\frac{1}{\tilde{q}'} = \frac{1}{2} + \frac{\sigma}{2} + \epsilon$ ,
$\frac{1}{\tilde{q}} = \frac{1}{2} - \frac{\sigma}{2} - \epsilon$ , 
$ 
\frac{1}{\tilde{r}'} = 1-\frac{\sigma}{2}-\epsilon$, $ 
\frac{1}{\tilde{r}} = \frac{\sigma}{2}+\epsilon$,  so that $\frac{1}{\tilde{q}} 
+ \frac{1}{\tilde{r}} = \frac{1}{2}$. Choose
$\frac{1}{\bar{r}} = 
\frac{1}{2}-\frac{\sigma}{2}$, so that  $ H_x^{\sigma} \subset L_x^{\bar{r}}$ 
and $\frac{1}{\bar{q}} = 
\frac{1}{2}+ \frac{\sigma}{2}$. Then $\frac{1}{\tilde{r}'} = \frac{1}{\bar{r}} 
+ 
\frac{1}{r_0}$ and  $\frac{1}{\tilde{q}'} = \frac{1}{\bar{q}} + 
\frac{1}{q_0}$. Thus we get the estimate
\begin{eqnarray*}
\|S_0 u\|_{L^{\infty}((0,T),L^2) \cap L^q((0,T), L^r)}  
& \lesssim & \|u_0\|_{L^2} + T^{\frac{1}{2}+\frac{\sigma}{2}} 
\|n\|_{L^{\infty}((0,T),H^{\sigma})}  \|u\|_{L^{q_0}((0,T), L^{r_0})} \\
& \lesssim & \|u_0\|_{L^2} + T^{\frac{1}{2}} 
\|n\|_{L^{\infty}((0,T),H^{\sigma})}  \|u\|_{L^{q_0}((0,T), L^{r_0})}
 \, ,
\end{eqnarray*}
because $T\le 1$.\\
{\bf b.} $ \sigma = 1$. \\
We choose $\frac{1}{\bar{r}} = \epsilon$ , $\frac{1}{\tilde{r}'} = \frac{1}{2}$ 
, $\frac{1}{\tilde{q}} = 0$ , $\frac{1}{\tilde{q}'} = 1$ , $\frac{1}{\bar{q}} = 
1-\epsilon$, so that again $\frac{1}{\tilde{q}} + \frac{1}{\tilde{r}} = 
\frac{1}{2}$ ,  $\frac{1}{\tilde{r}'} = \frac{1}{\bar{r}} + 
\frac{1}{r_0}$ ,  $\frac{1}{\tilde{q}'} = \frac{1}{\bar{q}} + 
\frac{1}{q_0}$ and $ H^{\sigma} \subset L_x^{\bar{r}}$. We thus get the 
estimate
\begin{eqnarray*}
\|S_0 u\|_{L^{\infty}((0,T),L^2) \cap L^q((0,T), L^r)}  
& \lesssim & \|u_0\|_{L^2} + T^{1-\epsilon} 
\|n\|_{L^{\infty}((0,T),H^{\sigma})}  \|u\|_{L^{q_0}((0,T), L^{r_0})} \\
& \lesssim & \|u_0\|_{L^2} + T^{\frac{1}{2}} 
\|n\|_{L^{\infty}((0,T),H^{\sigma})}  \|u\|_{L^{q_0}((0,T), L^{r_0})}
 \, .
\end{eqnarray*}
Moreover
\begin{eqnarray*}
\|S_{\pm} n_{\pm}\|_{L^{\infty}((0,T), H^{\sigma})} & \le &\|n_{\pm 
0}\|_{H^{\sigma}} 
+ c \||u|^2\|_{L^1((0,T),H^{\sigma -1})} \\
& \le & \|n_{\pm 0}\|_{H^{\sigma}} + c \|u\|_{L^2((0,T), L^{2p})} \\
& \le & \|n_{\pm 0}\|_{H^{\sigma}} + c T^{1-\frac{\sigma}{2}} 
\|u\|_{L^{q_1}((0,T), L^{r_1})}
\\ & \le & \|n_{\pm 0}\|_{H^{\sigma}} + c T^{\frac{1}{2}} 
\|u\|_{L^{q_1}((0,T), L^{r_1})} \, ,
\end{eqnarray*}
where $\frac{1}{p}=\frac{1}{2}-\frac{\sigma -1}{2}$, so that $L_x^{p} \subset 
H_x^{\sigma -1}$, and $\frac{1}{r_1} = \frac{1}{2} - \frac{\sigma}{4}$, 
$\frac{1}{q_1} = \frac{\sigma}{4}$, so that $\frac{1}{q_1} + \frac{1}{r_1}  = 
\frac{1}{2}$.
Giving similar estimates for the differences $S_0 u - S_0 \tilde{u}$ and 
$S_{\pm} n_{\pm} - S_{\pm} \tilde{n}_{\pm}$ and choosing $T$ subject to the 
conditions
\begin{eqnarray}
\label{5.1'}
T^{\frac{1}{2}} (\|n_{+0}\|_{H^{\sigma}} + 
\|n_{-0}\|_{H^{\sigma}}) & \lesssim & 1 \\
\label{5.2'}
T^{\frac{1}{2}} \|u_0\|_{L^2} & \lesssim &1 \\
\label{5.4'}
T^{\frac{1}{2}} \|u_0\|_{L^2}^2 & \lesssim& 
\|n_{+0}\|_{H^{\sigma}} + \|n_{-0}\|_{H^{\sigma}} \, , 
\end{eqnarray}
then Banach's fixed point theorem shows that there exists a unique solution of 
our system of integral equations (\ref{8}),(\ref{9}) on $[0,T]$ such that
$$ \|u\|_{L^{\infty}((0,T), L^2) \cap L^q((0,T), L^r)} \lesssim \|u_0\|_{L^2} $$
 and
\begin{equation}
\label{5.6'}
\|n_{\pm}\|_{L^{\infty}((0,T), H^{\sigma})} \le \|n_{\pm 0}\|_{H^{\sigma}} + c 
T^{\frac{1}{2}} \|u_0\|_{L^2}^2 \, .
\end{equation}
Using conservation of mass we have $\|u(t)\|_{L^2} = \|u_0\|_{L^2}$, and thus 
get a global solution unless we have after a number of iterations
$$ \|n_+(t)\|_{H^{\sigma}} + \|n_-(t)\|_{H^{\sigma}} \gg \|u_0\|_{L^2}^2 + 1 \, 
, $$
which we thus may suppose. Take this time as initial time $t=0$ so that 
$$ \|n_{+0}\|_{H^{\sigma}} + \|n_{-0}\|_{H^{\sigma}} \gg \|u_0\|_{L^2}^2 + 1 \, 
. $$
Then (\ref{5.4'}) is automatically satisfied. Using (\ref{5.1'}) we choose
\begin{equation}
\label{5.7'}
T^{\frac{1}{2}} \sim \frac{1}{\|n_{+0}\|_{H^{\sigma}} + 
\|n_{-0}\|_{H^{\sigma}}} \, .
\end{equation}
Then (\ref{5.2'}) is also satisfied. By (\ref{5.6'}) we see that after $m$ 
iterations of size (\ref{5.7'}) the quantity $\|n_{+0}\|_{H^{\sigma}} + 
\|n_{-0}\|_{H^{\sigma}}$ at most doubles, where
$$ m \sim \frac{\|n_{+0}\|_{H^{\sigma}} + 
\|n_{-0}\|_{H^{\sigma}}}{T^{\frac{1}{2}} \|u_0\|_{L^2}^2} \, . 
$$
The total time after $m$ iterations is
$$ mT \sim T^{\frac{1}{2}} \frac{\|n_{+0}\|_{H^{\sigma}} + 
\|n_{-0}\|_{H^{\sigma}}}{\|u_0\|_{L^2}^2} \sim \frac{1}{\|u_0\|_{L^2}^2} \, , 
$$
by (\ref{5.7'}), which is independent of $\|n_{\pm 0}\|_{H^{\sigma}}$.
We can now repeat the whole procedure with time steps of equal length, thus 
leading to a global solution.
\end{proof}

\section{Global well-posedness results for the case $D=3$}
We generalize the argument of Colliander-Holmer-Tzirakis 
\cite{CHT} for data $u_0 \in H^s \, , \, n_0 \in H^{\sigma} \, , \, n_1 \in 
H^{\sigma -1}$ from the case $\sigma = s \ge 0$ to the region $ s \ge 0$ , $ s 
- \frac{1}{2} < \sigma 
\le s +1$. 

\begin{proof}[Proof of Theorem \ref{Theorem 5.1} for $D=3$]
Using persistence of regularity it again suffices to consider the case $s=0$ , 
$0 \le \sigma \le 1$. Let $T \le 1$ and $ \frac{2}{q} + \frac{3}{r} = 
\frac{3}{2} $ .
Similarly as in the 2D case we estimate
\begin{eqnarray*}
\|S_0 u\|_{L^{\infty}((0,T),L^2) \cap L^q((0,T), L^r)} & \lesssim & 
\|u_0\|_{L^2} + 
\|nu\|_{L^{\tilde{q}'}((0,T), L^{\tilde{r}'})} \\
& \lesssim & \|u_0\|_{L^2} + \|n\|_{L^{\bar{q}}((0,T),L^{\bar{r}})}  
\|u\|_{L^4((0,T), L^3)} \\
& \lesssim & \|u_0\|_{L^2} + T^{\frac{1}{4}+\frac{\sigma}{2}} 
\|n\|_{L^{\infty}((0,T),H^{\sigma})}  \|u\|_{L^4((0,T), L^3)} \, .
\end{eqnarray*}
Here $\frac{1}{\tilde{q}'} = \frac{1}{2} + \frac{\sigma}{2}$ , $ 
\frac{1}{\tilde{r}'} = \frac{5}{6}-\frac{\sigma}{3}$, $\frac{1}{\bar{r}} = 
\frac{1}{2}-\frac{\sigma}{3}$ ($\Rightarrow \frac{1}{\tilde{q}} = 
\frac{1}{2}-\frac{\sigma}{2} $ , $ \frac{1}{\tilde{r}} = \frac{1}{6} + 
\frac{\sigma}{3}$ and $H^{\sigma} \subset L^{\bar{r}}$), so that 
$\frac{2}{\tilde{q}} + 
\frac{3}{\tilde{r}} = \frac{3}{2}$ , thus Strichartz' estimate applies. 
Furthermore we define $\frac{1}{\bar{q}} = \frac{1}{4} + \frac{\sigma}{2} $ so 
that $\frac{1}{\tilde{q}'} = \frac{1}{\bar{q}} + \frac{1}{4}$ , and also 
$\frac{1}{\tilde{r}'} = \frac{1}{\bar{r}} + \frac{1}{3}$ so that H\"older's 
estimate applies. 
Moreover
\begin{eqnarray*}
\|S_{\pm} n_{\pm}\|_{L^{\infty}((0,T), H^{\sigma})} & \le &\|n_{\pm 
0}\|_{H^{\sigma}} 
+ c \||u|^2\|_{L^1((0,T), H^{\sigma -1})} \\
& \le & \|n_{\pm 0}\|_{H^{\sigma}} + c \|u\|^2_{L^2((0,T), L^{2p})} \\
& \le & \|n_{\pm 0}\|_{H^{\sigma}} + c T^{\frac{3}{4}-\frac{\sigma}{2}} 
\|u\|^2_{L^{q_0}((0,T), L^{r_0})} \, ,
\end{eqnarray*}
where $\frac{1}{p}=\frac{5}{6}-\frac{\sigma}{3}$, so that $L_x^{p} \subset 
H_x^{\sigma -1}$, and $\frac{1}{r_0} = \frac{1}{2p}$, $\frac{1}{q_0} = 
\frac{1}{8} + 
\frac{\sigma}{4}$, so that $\frac{2}{q_0} + \frac{3}{r_0} = 
\frac{1}{4}+\frac{\sigma}{2}+\frac{5}{4}-\frac{\sigma}{2} = \frac{3}{2}$.
Giving similar estimates for the differences $S_0 u - S_0 \tilde{u}$ and 
$S_{\pm} n_{\pm} - S_{\pm} \tilde{n}_{\pm}$ and choosing $T$ subject to the 
conditions
\begin{eqnarray}
\label{5.1}
T^{\frac{1}{4}+\frac{\sigma}{2}} (\|n_{+0}\|_{H^{\sigma}} + 
\|n_{-0}\|_{H^{\sigma}}) & \lesssim &1 \\
\label{5.2}
T^{\frac{1}{4}+\frac{\sigma}{2}} \|u_0\|_{L^2} & \lesssim &1 \\
\label{5.3}
T^{\frac{3}{4}-\frac{\sigma}{2}} \|u_0\|_{L^2} & \lesssim &1 \\
\label{5.4}
T^{\frac{3}{4}+\frac{\sigma}{2}} \|u_0\|_{L^2}^2 & \lesssim &
\|n_{+0}\|_{H^{\sigma}} + \|n_{-0}\|_{H^{\sigma}} \, , 
\end{eqnarray}
then Banach's fixed point theorem shows that there exists a unique solution of 
our system of integral equations (\ref{8}),(\ref{9}) on $[0,T]$ such that
$$ \|u\|_{L^{\infty}((0,T), L^2) \cap L^q((0,T), L^r)} \lesssim \|u_0\|_{L^2} $$
 and
\begin{equation}
\label{5.6}
\|n_{\pm}\|_{L^{\infty}((0,T), H^{\sigma})} \le \|n_{\pm 0}\|_{H^{\sigma}} + c 
T^{\frac{3}{4}-\frac{\sigma}{2}} \|u_0\|_{L^2}^2 \, .
\end{equation}
Using conservation of mass we have $\|u(t)\|_{L^2} = \|u_0\|_{L^2}$, and thus 
get a global solution unless we have after a number of iterations
$$ \|n_+(t)\|_{H^{\sigma}} + \|n_-(t)\|_{H^{\sigma}} \gg \|u_0\|_{L^2}^3 + 1 \, 
, $$
which we thus may suppose. Take this time as initial time $t=0$ so that 
$$ \|n_{+0}\|_{H^{\sigma}} + \|n_{-0}\|_{H^{\sigma}} \gg \|u_0\|_{L^2}^3 + 1 \, 
. $$
Then (\ref{5.4}) is automatically satisfied. Using (\ref{5.1}) we choose
\begin{equation}
\label{5.7}
T^{\frac{1}{4}+\frac{\sigma}{2}} \sim \frac{1}{\|n_{+0}\|_{H^{\sigma}} + 
\|n_{-0}\|_{H^{\sigma}}} \, .
\end{equation}
Then (\ref{5.2}) is also satisfied, because $\|u_0\|_{L^2} \ll 
\|n_{+0}\|_{H^{\sigma}} + \|n_{-0}\|_{H^{\sigma}} $ and
$$ (T^{\frac{3}{4}-\frac{\sigma}{2}} \|u_0\|_{L^2})^3 \le (T^{\frac{1}{4}} 
\|u_0\|_{L^2})^3 \ll T^{\frac{3}{4}} (\|n_{+0}\|_{H^{\sigma}} + 
\|n_{-0}\|_{H^{\sigma}}) \sim T^{\frac{3}{4}} 
T^{-(\frac{1}{4}+\frac{\sigma}{2})} \lesssim 1 \, , $$
so that (\ref{5.3}) is also satisfied. By (\ref{5.6}) we see that after $m$ 
iterations of size (\ref{5.7}) the quantity $\|n_{+0}\|_{H^{\sigma}} + 
\|n_{-0}\|_{H^{\sigma}}$ at most doubles, where
$$ m \sim \frac{\|n_{+0}\|_{H^{\sigma}} + 
\|n_{-0}\|_{H^{\sigma}}}{T^{\frac{3}{4}-\frac{\sigma}{2}} \|u_0\|_{L^2}^2} \, . 
$$
The total time after $m$ iterations is
$$ mT \sim T^{\frac{1}{4}+\frac{\sigma}{2}} \frac{\|n_{+0}\|_{H^{\sigma}} + 
\|n_{-0}\|_{H^{\sigma}}}{\|u_0\|_{L^2}^2} \sim \frac{1}{\|u_0\|_{L^2}^2} \, , 
$$
by (\ref{5.7}), which is independent of $\|n_{\pm 0}\|_{H^{\sigma}}$.
We can now repeat the whole procedure with time steps of equal length, thus 
leading to a global solution.
\end{proof}

\begin{proof}[Proof of Theorem \ref{Theorem 1.6'}]
Using persistence of regularity it suffices to consider the case $s=0$ , 
$-\frac{1}{2} < \sigma < 0$. Using the local wellposedness theorem 
\cite[Theorem 1.1]{P} and conservation of mass we only have to give a bound for 
$\|n(t)\|_{H^{\sigma}} + \|\partial_t n(t)\|_{H^{\sigma -1}}$. We use 
Strichartz' estimate for the Klein-Gordon equation and get
$$ \|n\|_{L^{\infty}((0,T),H^{\sigma})} \lesssim \|n_0\|_{H^{\sigma}} + 
\|n_1\|_{H^{\sigma-1}} + \| |u|^2 \|_{L^{\tilde{q}'}((0,T),H^{\sigma + 
\rho,\tilde{r}'})} \, , $$
where $\tilde{r} = \infty -$ , $\tilde{q} = 2 +  $ , so that 
$\frac{1}{\tilde{q}} + \frac{1}{\tilde{r}} = \frac{1}{2}$ . Moreover $0 = 
1+\rho -3(\frac{1}{2} - \frac{1}{\tilde{r}}) + \frac{1}{\tilde{q}}$ 
$\Longleftrightarrow$ $\rho = 0-$ . Thus Strichartz' estimate applies. By 
Sobolev's embedding we have $L^1({\mathbb R}^3) \subset H^{\rho + 
\sigma,\tilde{r}'}{(\mathbb R}^3) = H^{ \sigma -,1+}{(\mathbb R}^3)$ . Thus we 
arrive at
\begin{eqnarray*} 
 \|n\|_{L^{\infty}((0,T),H^{\sigma})} & \lesssim & \|n_0\|_{H^{\sigma}} + 
\|n_1\|_{H^{\sigma-1}} + \|u \|^2_{L^{4-}((0,T),L^2)} \\
& \lesssim & \|n_0\|_{H^{\sigma}} + \|n_1\|_{H^{\sigma-1}} + T^{\frac{1}{2}+} 
\|u_0 \|^2_{L^2}\, .
\end{eqnarray*}
Similarly $\|\partial_t n\|_{L^{\infty}((0,T),H^{\sigma -1})}$ can be 
estimated.  
\end{proof}

\end{document}